\documentclass[11pt,reqno]{amsart}


\usepackage{amsmath,amsthm,amssymb}
\usepackage{amssymb}

\usepackage{graphics,graphicx}
\usepackage{hyperref}
\usepackage[usenames, dvipsnames]{xcolor}

\definecolor{darkblue}{rgb}{0.0,0.0,0.3}
\hypersetup{colorlinks,breaklinks,
  linkcolor=darkblue,urlcolor=darkblue,
anchorcolor=darkblue,citecolor=darkblue}

\usepackage[square,sort,comma,numbers]{natbib}
\usepackage{fancyvrb}
\usepackage{enumitem}

\usepackage{booktabs}
\usepackage{threeparttable}

\theoremstyle{plain}
\newtheorem{theorem}{Theorem}[section]
\newtheorem*{theorem*}{Theorem}
\newtheorem{lemma}[theorem]{Lemma}
\newtheorem{proposition}[theorem]{Proposition}
\newtheorem*{proposition*}{Proposition}
\newtheorem{corollary}[theorem]{Corollary}
\newtheorem*{corollary*}{Corollary}

\theoremstyle{definition}
\newtheorem{remark}[theorem]{Remark}

\numberwithin{equation}{section}

\renewcommand{\Im}{\operatorname{Im}}
\renewcommand{\Re}{\operatorname{Re}}

\DeclareMathOperator*{\Res}{Res}
\DeclareMathOperator*{\Vol}{Vol}

\DeclareMathOperator{\PSL}{PSL}

\setcounter{tocdepth}{1}


\title[The Laplace Transform of the Square for the Circle Problem]{The Laplace
Transform of the Second Moment in the Gauss Circle Problem}

\author[Hulse]{Thomas A. Hulse}
  \address{Boston College}
\author[Kuan]{Chan Ieong Kuan}
  \address{Sun Yat-Sen University}
\author[Lowry-Duda]{David Lowry-Duda}
  \address{ICERM and Brown University}
  \email{davidlowryduda@brown.edu}
  \thanks{This material is based upon work supported by the National Science
    Foundation Graduate Research Fellowship Program under Grant No. DGE 0228243.
    David also gratefully acknowledges support from EPSRC Programme Grant
    EP/K034383/1 LMF:\ L-Functions and Modular Forms and support from
    the Simons Collaboration in Arithmetic Geometry, Number Theory, and
    Computation via the Simons Foundation grant 546235.}
\author[Walker]{Alexander Walker}
  \address{Rutgers University}
  \thanks{This material is based upon work supported by the National Science
  Foundation under Grant No. DMS-1440140 while two of the authors were in
  residence at the Mathematical Sciences Research Institute in Berkeley,
  California, during the Spring 2017 semester.}

\date{\today}

\begin{document}

\begin{abstract}

The Gauss circle problem concerns the difference $P_2(n)$ between the area of a
circle of radius $\sqrt{n}$ and the number of lattice points it contains.
In this paper, we study the Dirichlet series with coefficients $P_2(n)^2$, and
prove that this series has meromorphic continuation to $\mathbb{C}$.
Using this series, we prove that the Laplace transform of $P_2(n)^2$ satisfies
$\int_0^\infty P_2(t)^2 e^{-t/X} \, dt = C X^{3/2} -X + O(X^{1/2+\epsilon})$,
which gives a power-savings improvement to a previous result of
Ivi\'c~\cite{Ivic1996}.

Similarly, we study the meromorphic continuation of the Dirichlet series
associated to the correlations $r_2(n+h)r_2(n)$, where $h$ is fixed and
$r_2(n)$ denotes the number of representations of $n$ as a sum of two squares.
We use this Dirichlet series to prove asymptotics for $\sum_{n \geq 1}
r_2(n+h)r_2(n) e^{-n/X}$, and to provide an additional evaluation of the
leading coefficient in the asymptotic for $\sum_{n \leq X} r_2(n+h)r_2(n)$.
\end{abstract}

\maketitle

\section{Introduction}

A classic result of Gauss states that the number $S_2(R)$ of integer lattice points contained in a circle of radius $\sqrt{R}$ is well-approximated by the circle's area.
To quantify the accuracy of this estimate, one defines the lattice point discrepancy
\begin{equation*}
  P_2(R):= S_2(R)-\pi R = \sum_{n \leq R} r_2(n) - \pi R,
\end{equation*}
in which $r_2(n)$ denotes the number of representations of $n$ as a sum of two integer squares.

The famous \emph{Gauss circle problem} is the pursuit of the minimal $\alpha$ for which $P_2(R) \ll R^{\alpha+\epsilon}$ for all $\epsilon > 0$.
Pointwise, the greatest improvement to the trivial bound $P_2(R) \ll \sqrt{R}$ of Gauss is due to Huxley~\cite{Huxley03}, who proved
\begin{equation*}
  P_2(R) \ll R^{131/416} (\log R)^{18637/8320} \qquad (\tfrac{131}{416} = 0.31490\ldots)
\end{equation*}
using his variant of the ``discrete Hardy-Littlewood circle method.''

Lower bounds in the form of $\Omega_\pm$-results suggest the well-known
conjecture $P_2(R) \ll R^{1/4+\epsilon}$.  This conjecture is also supported by
various on-average results, including \emph{mean square estimates}, which are
estimates of the form
\begin{equation}\label{eq:P2_mean_square_sharp}
  \int_0^R P_2(t)^2 \, dt =
  \Big( \frac{1}{3 \pi^2} \sum_{n \geq 1} \frac{r_2^2(n)}{n^{3/2}} \Big) R^{\frac{3}{2}}
  + Q(R),
\end{equation}
where $Q(R)$ is an error term.
The current best bound for $Q(R)$ is due to Nowak~\cite{Nowak04}, who showed that
\begin{equation*}
  Q(R) \ll R (\log R)^{\frac{3}{2}} \log \log R.
\end{equation*}

In~\cite{Ivic1996}, Ivi\'{c} considered the Laplace transform of $P_2(R)^2$ (as well as the second moment of the error in the Dirichlet divisor problem) and proved
\begin{equation}\label{eq:laplace_intro}
  \int_0^\infty P_2(t)^2 e^{-t/R} \, dt =
  \frac{1}{4} \Big( \frac{R}{\pi} \Big)^{\frac{3}{2}} \sum_{n \geq 1} \frac{r_2^2(n)}{n^{3/2}} - R + O_\epsilon(R^{\alpha + \epsilon}),
\end{equation}
where $\alpha$ is chosen such that the convolution estimate
\begin{align}
  \sum_{n \leq R} r_2(n) r_2(n+h)
  &=
  C_h R + O(R^{\alpha+\epsilon}) \label{eq:convolution_ivic}
\end{align}
holds uniformly for $h \leq \sqrt{X}$.
In this way, improved asymptotics for the convolution sum~\eqref{eq:convolution_ivic} lead to sharper asymptotics for the Laplace transform of $P_2(n)^2$.
In~\cite{Ivic2001}, Ivi\'{c} built on these techniques and recent results of Chamizo~\cite{Chamizo1999} to adapt an argument of Motohashi concerning convolution sums in the divisor problem~\cite{Motohashi1994}, and showed that one can take $\alpha \leq \frac{2}{3}$ in~\eqref{eq:laplace_intro}.
Thus the current best error term in the Laplace transform for $P_2(R)^2$ in~\eqref{eq:laplace_intro} is $O(R^{2/3 + \epsilon})$.

The primary result in this article is the following theorem, which establishes an improved error term in the above mean square Laplace transform.

\begin{theorem}
  For any $\epsilon > 0$,
  \begin{equation*}
    \int_0^\infty P_2(t)^2 e^{-t/R} \, dt =
    \frac{1}{4} \Big( \frac{R}{\pi} \Big)^{\frac{3}{2}}
    \sum_{n \geq 1} \frac{r_2^2(n)}{n^{3/2}} - R
    + O_\epsilon(R^{\frac{1}{2} + \epsilon}).
  \end{equation*}
\end{theorem}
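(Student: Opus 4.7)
The plan is to replace the continuous Laplace integral by a Mellin–Barnes contour integral involving the Dirichlet series attached to $P_2(n)^2$, whose meromorphic continuation to $\mathbb{C}$ is the announced main technical result of the paper. Starting from
$$e^{-t/R} = \frac{1}{2\pi i}\int_{(\sigma)} \Gamma(s) R^s t^{-s}\, ds, \qquad \sigma > 0,$$
I would interchange the order of integration and write
$$\int_0^\infty P_2(t)^2 e^{-t/R}\, dt = \frac{1}{2\pi i}\int_{(\sigma)} \Gamma(s) R^s \Big(\int_0^\infty P_2(t)^2 t^{-s}\, dt\Big) ds.$$
Since $S_2(t)$ is a step function constant on each interval $[n, n+1)$ and $P_2(t) = S_2(t) - \pi t$, the inner Mellin transform can be evaluated in closed form as a linear combination of $\sum_n P_2(n)^2/n^s$ together with Hurwitz-zeta-type corrections arising from the linear interpolation of $P_2$ between integers and the behavior near $t = 0$. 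These corrections have poles only at non-positive integers and on the line $\Re s = 1$, and they contribute elementary terms to the final answer.

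Next, expanding $S_2(n)^2 = \sum_{a, b \leq n} r_2(a) r_2(b)$, performing an Abel summation in $n$, and splitting on $h = |a - b|$, the Dirichlet series $D(s) := \sum_n P_2(n)^2/n^s$ is written as a sum over shifts $h \geq 1$ of the shifted convolution Dirichlet series $Z_h(s) = \sum_n r_2(n) r_2(n+h)/n^s$ introduced in the paper, plus a diagonal piece involving $\sum r_2(n)^2/n^s$ and lower-order terms from the $-\pi t$ contributions. The meromorphic continuation of each $Z_h(s)$, obtained via spectral theory on $\Gamma_0(4)$, together with uniform polynomial-in-$|\Im s|$ growth bounds, then yields meromorphic continuation of $D(s)$ to $\mathbb{C}$. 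In the half-plane $\Re s > 1/2$ the only surviving poles are a simple pole at $s = 3/2$ whose residue assembles the mean-square constant $\frac{1}{3\pi^2}\sum r_2(n)^2/n^{3/2}$, and a simple pole at $s = 1$.

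I would then shift the line of integration from $\Re s = \sigma > 3/2$ down to $\Re s = 1/2 + \epsilon$. The residue at $s = 3/2$, combined with $\Gamma(3/2) = \tfrac{\sqrt{\pi}}{2}$, produces the main term $\tfrac{1}{4}(R/\pi)^{3/2}\sum r_2(n)^2/n^{3/2}$; the residue at $s = 1$ similarly yields the secondary term $-R$. Poles of $\Gamma(s)$ at non-positive integers lie far to the left and contribute negligibly. The remaining integral on $\Re s = 1/2 + \epsilon$ is bounded by $R^{1/2+\epsilon}$ once one has polynomial bounds on $D(s)$ on this vertical line, since the exponential decay of $\Gamma(s)$ absorbs any polynomial factor in $|\Im s|$.

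The main obstacle is establishing the uniform polynomial-in-$|\Im s|$ bounds for $D(s)$ on $\Re s = 1/2 + \epsilon$. This requires simultaneous control of the spectral expansions of the individual $Z_h(s)$ in both the shift $h$ and the imaginary part of $s$, so that the sum over $h$ converges with the required growth rate, together with a check that no exceptional eigenvalues produce poles of $Z_h(s)$ in the strip $1/2 < \Re s \leq 3/2$ besides those already accounted for. This uniform analytic control on the boundary of the critical strip is precisely what upgrades Ivić's $O(R^{2/3+\epsilon})$ to the claimed $O(R^{1/2+\epsilon})$.
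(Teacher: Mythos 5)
Your overall strategy is the same as the paper's: build the meromorphic continuation of the Dirichlet series attached to $P_2(n)^2$ out of the diagonal term and the spectrally-continued shifted convolutions on $\Gamma_0(4)$, shift the contour in a Mellin--Barnes representation of the Laplace transform to $\Re s = \tfrac12+\epsilon$, and read off the main terms from the residues at $s=\tfrac32$ and $s=1$. The paper differs only in ordering: it first proves the discrete asymptotic $\sum_n P_2(n)^2 e^{-n/X} = C_{3/2}\Gamma(\tfrac32)X^{3/2} + (\tfrac{\pi^2}{6}-1)X + O(X^{1/2+\epsilon})$ and then passes to the continuous integral, whereas you fold the continuous-to-discrete reduction into the Mellin transform itself. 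That difference is cosmetic, but it is exactly where your write-up has a genuine gap.

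The gap is the coefficient of $R$. The residue of $D(s) := \sum_n P_2(n)^2 n^{-s}$ at $s=1$ is $\tfrac{\pi^2}{6}-1$, not $-1$, so the residue of the \emph{discrete} Dirichlet series at $s=1$ does not "yield the secondary term $-R$" as you assert. The corrections you dismiss as elementary --- coming from $P_2(t) = P_2(\lfloor t\rfloor) + \pi(\lfloor t\rfloor - t)$ on each interval $[n,n+1)$ --- contribute at the level of $R^1$ and must be computed: the quadratic piece $\pi^2\int_0^\infty(\lfloor t\rfloor - t)^2 e^{-t/R}\,dt$ gives $+\tfrac{\pi^2}{3}R$, and the cross term $2\pi\int_0^\infty P_2(\lfloor t\rfloor)(\lfloor t\rfloor - t)e^{-t/R}\,dt$ gives $-\tfrac{\pi^2}{2}R$ (this last is nontrivial because the exponentially weighted average of $P_2(n)$ is $\tfrac{\pi}{2}R + O(1)$, not $o(R)$; the paper evaluates it via the theta functional equation). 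Only the combination $(\tfrac{\pi^2}{6}-1) + \tfrac{\pi^2}{3} - \tfrac{\pi^2}{2} = -1$ produces the stated $-R$. Two smaller points: your two statements about the residue at $s=\tfrac32$ are mutually inconsistent --- the residue is $\tfrac{1}{2\pi^2}\sum r_2(n)^2 n^{-3/2}$ (i.e.\ $\tfrac32$ times the mean-square constant $\tfrac{1}{3\pi^2}\sum r_2(n)^2 n^{-3/2}$), and only with that value does $\Gamma(\tfrac32)$ produce $\tfrac14(R/\pi)^{3/2}\sum r_2(n)^2n^{-3/2}$; and the two-sided Mellin transform $\int_0^\infty P_2(t)^2 t^{-s}\,dt$ converges for no $s$ (the pieces over $(0,1)$ and $(1,\infty)$ have disjoint half-planes of convergence), so the interchange of integrals must be preceded by a splitting of the range rather than applied directly.
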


The exponent $\frac{1}{2}$ in the error term is optimal, and arises from a line
of spectral poles appearing in our analysis. Assuming the Riemann Hypothesis, it
is straightforward to remove the $\epsilon$ from the error term. This leads to
Theorem~\ref{thm:optimal_error}.

We approach this problem by investigating the Dirichlet series associated to $S_2(R)^2$ and $P_2(R)^2$, defined by
\begin{equation*}
  D(s, S_2 \times S_2) := \sum_{n \geq 1} \frac{S_2(n)^2}{n^{s + 2}}, \quad D(s, P_2 \times P_2) := \sum_{n \geq 1} \frac{P_2(n)^2}{n^s}.
\end{equation*}
These Dirichlet series have been partially analyzed before.
For example, a recent paper of Furuya and Tanigawa~\cite{Furuya2014} builds upon the earlier work of Ivi\'c to give a partial meromorphic continuation of the Dirichlet series $D(s,P_2 \times P_2)$.
In this paper, techniques developed in~\cite{HulseKuanLowryDudaWalker17}, \cite{HKLDWshortsums}, and~\cite{HulseGaussSphere} are applied to derive the full meromorphic continuation of $D(s,P_2 \times P_2)$. 
It is further possible to show that $D(s, P_2 \times P_2)$ is of polynomial growth in vertical strips, allowing straightforward analysis from integral transforms.

Let $B_k(\sqrt R)$ denote the $k$-dimensional ball of radius $\sqrt R$, let $r_k(n)$ denote the number of representations of $n$ as a sum of $k$ squares, and define
\begin{equation*}
  S_k(R) := \sum_{n \leq R} r_k(n), \quad P_k(R) := \sum_{n \leq R} r_k(n) - \Vol B_k(\sqrt R).
\end{equation*}
Estimating $P_k(R)$ represents the $k$-dimensional analogue of the Gauss circle problem, described in detail in the survey article~\cite{IvicSurvey2006}.
In~\cite{HulseGaussSphere}, the authors showed that for $k \geq 3$, the Dirichlet series $D(s, S_k \times S_k)$ and $D(s, P_k \times P_k)$ have meromorphic continuation to the complex plane.
These continuations were used to prove $k$-dimensional analogues of~\eqref{eq:P2_mean_square_sharp} and~\eqref{eq:laplace_intro} in the case $k \geq 3$.

Analysis of the function $\mathcal{V}(z) \approx \lvert \theta^2(z) \rvert^2$,
where $\theta(z)$ is the standard Jacobi theta function, forms the heart of this
paper.
Some of the techniques used in~\cite{HulseGaussSphere} to understand
$D(s, P_k \times P_k)$ for $k \geq 3$ (corresponding to studying
$\lvert \theta^k(z) \rvert^2$) apply directly in the case when
$k = 2$, and we try to indicate these parallels when applicable.
Some techniques can be vastly simplified, and others break down.

Many of these differences stem from the Eisenstein series used to
study the behavior of $\mathcal{V}(z)$ at the cusps of $\Gamma_0(4)$.
For dimensions $k \geq 3$, it is possible to use real analytic
Eisenstein series with carefully chosen spectral parameter, but in dimension
$k=2$ the analogous Eisenstein series diverges.
We instead work with the constant terms of the Laurent expansions of these real
analytic Eisenstein series, which retain the automorphic properties. From this we are able to show that this more complicated behavior of
$\mathcal{V}(z)$ at the cusps leads to the existence of double poles in
associated $L$-functions, which in turn perfectly cancel out in the
analysis leading to the proof of our main theorems.

We note that $\mathcal{V}(z)$ is morally the norm of $\theta^2(z)$, a
full-integral weight modular form on $\Gamma_0(4)$ with multiplicative
coefficients. This allows some otherwise technical arguments to be simplified or
reduced to a comparison of Euler products (e.g.\ the computation of
the residues in section \S\ref{sec:DsP2P2}), in contrast to what we would expect
from analogous arguments in dimension $k \geq 3$.
One major instance where the dimension $k = 2$ case is cleaner than for $k \geq
3$ is when dealing with the Rankin-Selberg convolution $\sum r_k(n)^2 n^{-s}$;
the appendix to~\cite{HulseGaussSphere} is a technical argument describing a
method to understand this convolution series, but for dimension $2$ this sum can
be readily described in terms of $\zeta(s)$ and $L(s, \chi)$.

In this work, we show how to modify and extend previous methods to address
the dimension $2$ case.
This culminates in Theorem~\ref{thm:DsP2xP2_properties}, which describes the
meromorphic continuation of $D(s, P_2 \times P_2)$ to the entire complex plane.

The techniques of this paper can also be used to give explicit meromorphic
continuation the shifted convolution Dirichlet series,
\begin{equation*}
  D_2(s; h) := \sum_{n \geq 0} \frac{r_2(n)r_2(n+h)}{(n+h)^s}.
\end{equation*}
These shifted convolutions give a new way towards understanding Chamizo's
asymptotic~\eqref{eq:convolution_ivic} and give a new derivation of the constant
$C_h$.
With the aid of exponential smoothing, particularly strong smoothed versions
of~\eqref{eq:convolution_ivic} are attainable.
\begin{theorem}
  For any $\epsilon > 0$,
  \begin{equation*}
    \sum_{n \geq 1} r_2(n) r_2(n+h) e^{-n/X} = C_h X + O_\epsilon\big( X^{\frac{1}{2} + \epsilon}e^{h/X} h^\Theta \big).
  \end{equation*}
\end{theorem}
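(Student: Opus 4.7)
The plan is to realize the smoothed sum as an inverse Mellin transform of $D_2(s;h)$ and then shift the contour past the pole at $s = 1$. Starting from the identity $e^{-n/X} = e^{h/X} e^{-(n+h)/X}$ together with the Mellin representation $e^{-y} = \frac{1}{2\pi i}\int_{(\sigma)} \Gamma(s) y^{-s}\, ds$ valid for $\sigma > 0$, I would write
\[
\sum_{n \geq 1} r_2(n) r_2(n+h) e^{-n/X}
= \frac{e^{h/X}}{2\pi i} \int_{(\sigma)} \Gamma(s)\, X^s\, D_2(s;h)\, ds
\]
for $\sigma$ to the right of the abscissa of absolute convergence of $D_2(s;h)$, e.g.\ $\sigma = 1 + \epsilon$.

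Next I would shift the line of integration leftward to $\Re s = \tfrac{1}{2} + \epsilon$, using the meromorphic continuation of $D_2(s;h)$ to $\mathbb{C}$ that is furnished by the methods of this paper. Inside the strip $\tfrac{1}{2} + \epsilon < \Re s \leq 1 + \epsilon$, the only singularity is the simple pole of $D_2(s;h)$ at $s = 1$, whose residue contributes $e^{h/X}\,\Gamma(1)\, X \cdot \Res_{s=1} D_2(s;h) = e^{h/X} C_h X$. Writing $e^{h/X} C_h X = C_h X + (e^{h/X} - 1) C_h X$ and using the crude bound $C_h \ll h^\epsilon$, the second piece is absorbed into the stated error term, leaving the claimed main term $C_h X$.

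What remains is to bound the shifted integral on $\Re s = \tfrac{1}{2} + \epsilon$. The factor $X^s$ yields $X^{1/2 + \epsilon}$, $\Gamma(s)$ provides exponential decay in $|\Im s|$, and $D_2(s;h)$ grows only polynomially in $|\Im s|$ in this strip. To extract the $h$-dependence, I would invoke the spectral expansion of $D_2(s;h)$: the contributions from the continuous and discrete spectra involve the $h$-th Fourier coefficients of Eisenstein series and Maass cusp forms for $\Gamma_0(4)$, and bounding these coefficients via the best available bound toward Ramanujan--Petersson produces the factor $h^{\Theta}$.

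The principal obstacle is achieving uniformity in $h$ across the critical line. This requires tracking the $h$-dependence through both the cuspidal spectral sum (where bounds on Maass form Fourier coefficients enter) and the Eisenstein contribution (where divisor-sum type bounds suffice), while simultaneously controlling convergence in $|\Im s|$. One must also verify that no other poles of $D_2(s;h)$ lie in the half-plane $\tfrac{1}{2} + \epsilon \leq \Re s \leq 1$, so that the only residue picked up in the shift is the one at $s=1$ and no extraneous main terms appear.
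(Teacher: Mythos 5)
Your overall strategy---representing the smoothed sum as $\frac{e^{h/X}}{2\pi i}\int_{(\sigma)}\Gamma(s)X^{s}D_2(s;h)\,ds$, shifting to $\Re s=\tfrac12+\epsilon$, and controlling the discrete and continuous spectral contributions via the bound $\rho_j(h)\ll_j h^{\Theta}$ and divisor-type bounds on $\varphi_{\mathfrak{a}h}$---is exactly the one used in the paper. The gap is in your treatment of the main term. You extract only the residue of $\Gamma(s)X^{s}D_2(s;h)$ at $s=1$, then (a) claim that $(e^{h/X}-1)C_hX$ can be absorbed into the error term, and (b) implicitly fold the remainder of the non-spectral part along $\Re s=\tfrac12+\epsilon$ into the $O(X^{1/2+\epsilon}h^{\Theta})$ bound. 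Neither step is uniform in $h$. Since $C_h\gg 1$ for odd $h$, the quantity $(e^{h/X}-1)C_hX$ has size roughly $h$ when $h\ll X$, which exceeds $X^{1/2+\epsilon}e^{h/X}h^{\Theta}$ once $h$ is somewhat larger than $X^{1/2}$ (precisely, once $h\gg X^{1/(2(1-\Theta))+\epsilon}$). Moreover the non-spectral integrand on the shifted line carries the factor $h^{1-s}$, of absolute value $h^{1/2-\epsilon}$, which is far larger than $h^{\Theta}$, so a ``polynomial growth plus spectral coefficients'' bound on the whole shifted integral cannot yield the claimed error. Since the theorem requires the implied constant to be independent of $h$ (this is what makes it useful for $h$ up to $X$), these losses are fatal as written.

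The repair is the paper's: invert the non-spectral term exactly rather than by residue-plus-estimate. Because $\Gamma(s)/(s-1)=\Gamma(s-1)$, one has
\[
\frac{1}{2\pi i}\int_{(\sigma)}\frac{4\pi^{2}\big(\varphi_{\infty h}(1)+\varphi_{0h}(1)\big)}{(s-1)\,h^{s-1}}\,X^{s}\Gamma(s)\,ds \;=\; C_h X e^{-h/X}
\]
on the nose, so after multiplying by $e^{h/X}$ the main term is exactly $C_hX$, with no residual discrepancy to absorb. Only the genuinely spectral pieces of $D_2(s;h)$ are then estimated on $\Re s=\tfrac12+\epsilon$, where your $h^{\Theta}$ bound (discrete part) and $d(h)$ bound (continuous part) apply and give the stated error. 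With that substitution your argument coincides with the paper's proof.
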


Here, $\Theta$ refers to the best-known progress towards the non-archimedean
Ramanujan conjecture. A full statement of this result, including a non-trivial
estimate for the corresponding sharp sum~\eqref{eq:convolution_ivic} and a new
evaluation of $C_h$, is given in Theorem~\ref{thm:chamizo}. While Chamizo also
used spectral techniques, including trace-type formulas, to evaluate the leading
coefficient, our methods are very different.

Although $D_2(s;h)$ can be used to provide bounds for the shifted convolution
sum~\eqref{eq:convolution_ivic}, the authors have not been able to improve upon
Chamizo's bound for that sum.

However, by summing over both $n$ and $h$, we gain deep understanding of
\begin{equation*}
  Z_2(s,w) := \sum_{h \geq 1} \sum_{n \geq 0} \frac{r_2(n) r_2(n+h)}{(n+h)^s h^w},
\end{equation*}
which can be used to recognize significant cancellation within $D(s, P_k \times
P_k)$. The reason our analysis for $D(s, P_k \times P_k)$ allows improvement of
known bounds for the Laplace transform of the second
moment~\eqref{eq:laplace_intro} without also yielding improvements over
Chamizo's bound for the shifted convolution~\eqref{eq:convolution_ivic} for each
fixed $h$ is due to the averaging over both $n$ and $h$; the obstructions
coming from $D_s(s;h)$ explicitly combine and cancel. See
Remark~\ref{remark:chamizo_compare} for more on this aspect.

\section*{Directions for Further Research}

It is natural to ask whether these techniques could be applied
to Dirichlet's divisor problem and its variants, perhaps by replacing the
$r_2(n)$ coefficients in the above constructions with the sum-of-divisors
functions $\sigma_{k-1}(n) = \sum_{d \mid n} d^{k-1}$.
These appear as the coefficients of both holomorphic and special values of
real-analytic Eisenstein series.
Both methods have their challenges: the holomorphic object becomes more
complicated when $k=1$, and the real analytic object has a Fourier expansion
comprised of $K$-Bessel functions instead of the much simpler $e^{2\pi i z}$.

We might also investigate the problem of counting integer lattice points inside
of certain ellipses.
If we replace $\theta^2(z)$ with $\theta(a^2z)\theta(b^2z)$ for $a,b\in
\mathbb{N}$, we would be able to count the number of integer lattice points
inside a growing ellipse with major and minor radii of integer proportion.
Additionally, by considering the Fourier coefficients of theta functions of
binary quadratic forms we might be able to count the the number of lattice
points inside level curves corresponding to those quadratic forms.
It would also be interesting to consider general Jacobi theta functions.

More broadly, multiple Dirichlet series continue to have wide-ranging
applications in number theory.
In particular, it has recently been discovered by the authors that for a fixed,
square-free $t \in \mathbb{N}$, the asymptotics of the partial sum
\begin{equation*}
  \sum_{m,n \in X} r_1(m-n)r_1(m+n)r_1(m)r_1(tm),
\end{equation*}
where $r_1(m)$ is the $m$-th Fourier coefficient of $\theta(z)$, are deeply
related to whether $t$ is a congruent number --- there is a main term if and
only if $t$ is congruent, and the error term is connected to the rank of an
associated elliptic curve.
By decomposing a corresponding multiple Dirichlet series into manageable
variants of $D_1(s;h)$, the authors hope to build on the methods of this paper
in order to gain new insight into the Congruent Number Problem.
Preliminary results can be found in~\cite{ChanEtAlCongruent}.

\section*{Acknowledgments}

We would like to thank Aleksandar Ivi\'{c} for suggesting that we consider this problem, and for his helpful remarks.
We also thank the referee for many valuable comments, and for pointing out the classical estimates leading to Remark~\ref{remark:ref} and simplifications in the proof of Lemma~\ref{lem:laplace_decomp_3}.

\section{Decomposition of $D(s, S_2 \times S_2)$ and $D(s, P_2 \times P_2)$}

In this section, we show that the meromorphic properties of $D(s, P_2 \times P_2)$ can be recovered from the meromorphic properties of $D(s, S_2 \times S_2)$.
We then decompose $D(s, S_2 \times S_2)$ into a sum of simpler functions that we analyze in later sections.
The methodology of this section is extremely similar to section \S2 of~\cite{HulseGaussSphere}, so we sketch the proofs and focus on the differences.

\begin{proposition}\label{prop:DsPP_to_DsSS}
The Dirichlet series $D(s,P_2 \times P_2)$ and $D(s,S_2 \times S_2)$ are related through the equality
  \begin{align*}
  D(s, P_2 \times P_2) &= D(s-2, S_2 \times S_2) +\pi^2 \zeta(s-2)\\
  &\quad -2 \pi \zeta(s-1) -2\pi L(s-1,\theta^2) \\
  &\quad + i \int_{(\sigma)} L(s-1-z, \theta^2) \zeta(z) \frac{\Gamma(z) \Gamma(s-1-z)}{\Gamma(s-1)} \, dz,
  \end{align*}
when $\sigma > 1$ and $\Re s > \sigma$, where $L(s, \theta^2)$ is the normalized $L$-function
\begin{equation}\label{eq:Ls_theta_squared_identity}
  L(s,\theta^2) := \sum_{n \geq 1} \frac{r_2(n)}{n^{s}} = 4 \zeta_{\mathbb{Z}[i]}(s) =  4 \zeta(s)L(s,\chi),
\end{equation}
  and $\chi = (\tfrac{-4}{\cdot})$ is the non-trivial Dirichlet character of modulus $4$.
\end{proposition}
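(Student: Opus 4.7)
The plan is to expand $P_2(n)^2 = (S_2(n) - \pi n)^2$ and reduce the proposition to an identity for a single cross-term Dirichlet series. Writing
\begin{equation*}
P_2(n)^2 = S_2(n)^2 - 2\pi n S_2(n) + \pi^2 n^2,
\end{equation*}
dividing by $n^s$, and summing over $n \geq 1$, the first and last pieces yield $D(s-2, S_2\times S_2)$ and $\pi^2\zeta(s-2)$ immediately. What remains is to evaluate the cross term
\begin{equation*}
T(s) := \sum_{n \geq 1} \frac{S_2(n)}{n^{s-1}}
\end{equation*}
and to check that $-2\pi\, T(s)$ matches the remaining terms on the right-hand side of the stated identity. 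Note that the sign/factor bookkeeping is consistent with the claim since $-2\pi/(2\pi i) = i$, which is what produces the prefactor $i$ in front of the integral.

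To evaluate $T(s)$, I would open up $S_2(n) = \sum_{0 \leq m \leq n} r_2(m)$ and exchange the order of summation. The contribution from $m = 0$ is $r_2(0)\,\zeta(s-1) = \zeta(s-1)$, and isolating the diagonal $m = n$ gives $L(s-1,\theta^2)$. The remaining terms, parametrized by $(m,k)$ with $m, k \geq 1$ via $n = m + k$, are
\begin{equation*}
\sum_{m,k \geq 1} \frac{r_2(m)}{(m+k)^{s-1}}.
\end{equation*}
I would handle this sum via the Mellin--Barnes representation
\begin{equation*}
\frac{1}{(m+k)^{s-1}} = \frac{1}{2\pi i} \int_{(\sigma)} \frac{\Gamma(z)\,\Gamma(s-1-z)}{\Gamma(s-1)}\, m^{-(s-1-z)}\, k^{-z}\, dz,
\end{equation*}
valid for $0 < \sigma < \Re(s-1)$. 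Interchanging summation and integration factorizes the double sum into $L(s-1-z, \theta^2)\zeta(z)$, producing the exact integral displayed in the proposition. Combining the three pieces of $T(s)$, multiplying by $-2\pi$, and reassembling the expansion of $P_2(n)^2$ gives the claimed identity.

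The main step requiring care will be the justification of the interchange of summation and integration in the Mellin--Barnes step. I would first restrict to a region where both $\zeta(z)$ and $L(s-1-z, \theta^2)$ converge absolutely on the line $\Re z = \sigma$, namely $1 < \sigma < \Re s - 2$; the exponential decay in $|\Im z|$ of $\Gamma(z)\Gamma(s-1-z)/\Gamma(s-1)$ from Stirling's formula, together with polynomial growth of $\zeta$ and $L(\cdot,\theta^2)$ on vertical lines, then guarantees absolute convergence of the double integral-sum and legitimizes the swap. The identity is then extended to the range $\Re s > \sigma$ stated in the proposition by analytic continuation of each ingredient, using the meromorphic continuations of $\zeta(s)$ and $L(s,\theta^2) = 4\zeta(s)L(s,\chi)$ recorded in~\eqref{eq:Ls_theta_squared_identity}, while verifying that no new poles are crossed between the original and final contour. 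As the authors remark, this entire argument closely parallels \S2 of~\cite{HulseGaussSphere}; no genuinely new obstruction appears at this stage in the two-dimensional setting, with the characteristic $k=2$ complications (double poles arising from the cuspidal expansion of $\mathcal{V}(z)$) being deferred to later sections.
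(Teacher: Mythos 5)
Your proposal is correct and follows essentially the same route as the paper: expand $P_2(n)^2 = S_2(n)^2 - 2\pi n S_2(n) + \pi^2 n^2$, split $\sum_n S_2(n) n^{-(s-1)}$ into the $m=0$ term, the diagonal, and the off-diagonal double sum, and decouple the latter with the Mellin--Barnes formula from~\cite[6.422(3)]{GradshteynRyzhik07}. The only piece you gloss over is the Euler-product verification of $L(s,\theta^2) = 4\zeta(s)L(s,\chi)$ from the multiplicativity of $r_2(n)/4 = \sum_{d\mid n}\chi(d)$, which the paper includes as part of the proposition.
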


In Proposition~\ref{prop:DsPP_to_DsSS} and throughout the paper, we use the common notation
\begin{equation*}
  \frac{1}{2\pi i} \int_{(\sigma)} f(z) \, dz := \frac{1}{2\pi} \int_{-\infty}^\infty f(\sigma + it) \, dt.
\end{equation*}
The $\theta^2$ appearing in $L(s, \theta^2)$ refers to the square of the standard theta function
\begin{equation}\label{eq:standard_theta}
  \theta(z) = \sum_{n \in \mathbb{Z}} e^{2\pi i n^2 z}.
\end{equation}
Note that we maintain the notation of $L(s,\theta^2)$ rather than $4
\zeta(s)L(s,\chi)$ throughout this work to facilitate and encourage the
possible generalization of this construction to other non-cusp forms.

\begin{proof}
  Note that $P_2(n)^2$ and $S_2(n)^2$ are related by the formula
  \begin{equation*}
    P_2(n)^2 = S_2(n)^2 - 2 \pi n S_2(n) + \pi^2 n^2.
  \end{equation*}
  Divide by $n^s$, sum over $n \geq 1$, and simplify.
  For the middle term, note that
  \begin{align*}
    \sum_{n \geq 1} \frac{S_2(n)}{n^{s-1}} &= \sum_{n \geq 1} \frac{1 + r_2(n)}{n^{s - 1}} + \sum_{n \geq 1}\sum_{m=1}^{n-1} \frac{r_2(m)}{n^{s-1}} \\
    &= \zeta(s-1) + L(s-1, \theta^2) + \sum_{m,h \geq 1} \frac{r_2(m)}{(m+h)^{s-1}}.
  \end{align*}
  We decouple $m$ and $n$ in the final sum with the Mellin-Barnes identity
  \begin{equation*}
    \frac{1}{(m+h)^s} = \frac{1}{2\pi i} \int_{(\sigma)} \frac{1}{m^{s - z} h^z} \frac{\Gamma(z) \Gamma(s - z)}{\Gamma(s)} \, dz, \quad (\sigma > 0, \Re s > \sigma),
  \end{equation*}
  given in~\cite[6.422(3)]{GradshteynRyzhik07}.
  The remaining simplification is straightforward.


  %
  The identity~\eqref{eq:Ls_theta_squared_identity} follows immediately
  from comparing Euler products after noting that $r_2(n)/4 = \sum_{d \mid n}
  \chi(d)$ is multiplicative.

\end{proof}

\begin{remark}
  Simple factorizations for $L(s,\theta^k)$ are only known for $k=2,4,6,8$. Simplification along the lines of~\eqref{eq:Ls_theta_squared_identity} was therefore not available in the previous work~\cite{HulseGaussSphere} for general $k \geq 3$.
\end{remark}

As in~\cite[Proposition~2.2]{HulseGaussSphere} or~\cite[Proposition~3.1]{HulseKuanLowryDudaWalker17}, we can decompose $D(s, S_2 \times S_2)$ into the sum of a function $W_2(s)$ and an associated Mellin-Barnes integral.

\begin{proposition}\label{prop:DsSS_decomposition}
  The Dirichlet series associated to $S_2(n)^2$ decomposes into
\begin{align*}
  D(s,S_2 \times S_2) &= \zeta(s+2) + W_2(s) \\
    &\quad + \frac{1}{2\pi i} \int_{(\sigma)} W_2(s-z) \zeta(z) \frac{\Gamma(z)\Gamma(s+2-z)}{\Gamma(s+2)} \, dz
\end{align*}
for $\Re s > 2$ and $ 1< \sigma < \Re (s-1)$, in which
\begin{align*}
  W_2(s)
  &=
  \frac{16\zeta(s+2)^2L(s+2,\chi)^2}{(1+2^{-s-2})\zeta(2s+4)}
  +
  2 Z_2(s+2,0),
  \\
  Z_2(s,w)
  &=
  \sum_{h \geq 1} \sum_{n \geq 0}
  \frac{r_2(n+h)r_2(n)}{(n+h)^s h^w}.
\end{align*}
Here $Z_2(s,w)$ converges locally normally for $\Re s > 2$ and $\Re w \geq 0$.
\end{proposition}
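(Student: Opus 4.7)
The approach is to expand the square $S_2(n)^2$ into a double sum over lattice-point representations, swap the order of summation against $\sum_{n \geq 1} n^{-s-2}$, peel off the trivial constant contribution (which produces the $\zeta(s+2)$ term), and then apply a Mellin--Barnes representation to the resulting tail sum to decouple the $\max$-function from the summation index. First, I would write $S_2(n) = \sum_{0 \leq m \leq n} r_2(m)$ with the convention $r_2(0) = 1$, giving
\[
  D(s, S_2 \times S_2) = \sum_{m_1, m_2 \geq 0} r_2(m_1)\, r_2(m_2) \sum_{n \geq \max(1, m_1, m_2)} \frac{1}{n^{s+2}}
\]
for $\Re s > 2$ by absolute convergence (using $S_2(n) \ll n$). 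The contribution from $(m_1, m_2) = (0,0)$ is precisely $\zeta(s+2)$. For every other pair, set $N := \max(m_1, m_2) \geq 1$, split $\sum_{n \geq N} n^{-s-2} = N^{-s-2} + \sum_{k \geq 1} (N+k)^{-s-2}$, and apply the classical Mellin--Barnes identity
\[
  \frac{1}{(N+k)^{s+2}} = \frac{1}{2\pi i} \int_{(\sigma)} \frac{1}{N^{s+2-z}\, k^{z}}\, \frac{\Gamma(z)\,\Gamma(s+2-z)}{\Gamma(s+2)}\, dz,
\]
valid for $0 < \sigma < \Re s + 2$. Summing over $k \geq 1$ with $\sigma > 1$ produces $\zeta(z)$ and yields the tail identity
\[
  \sum_{n \geq N} \frac{1}{n^{s+2}} = \frac{1}{N^{s+2}} + \frac{1}{2\pi i} \int_{(\sigma)} \frac{\zeta(z)}{N^{s+2-z}}\, \frac{\Gamma(z)\,\Gamma(s+2-z)}{\Gamma(s+2)}\, dz.
\]

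Substituting this back and interchanging the $(m_1, m_2)$-sum with the integral --- justified by Stirling-type exponential decay of the Gamma ratio together with absolute convergence of the coefficient sum in the strip $1 < \sigma < \Re(s-1)$ --- the polar piece becomes
\[
  \sum_{\substack{m_1, m_2 \geq 0 \\ (m_1, m_2) \neq (0,0)}} \frac{r_2(m_1)\, r_2(m_2)}{\max(m_1, m_2)^{s+2}}.
\]
Splitting this sum by whether $m_1 = m_2$ or $m_1 \neq m_2$ and substituting $h := m_2 - m_1 \geq 1$ in the off-diagonal case, the diagonal contributes $\sum_n r_2(n)^2/n^{s+2}$ and the off-diagonal contributes $2Z_2(s+2, 0)$. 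The off-diagonal pairs with $\min(m_1, m_2) = 0$ supply exactly the $n = 0$ terms of $Z_2(s+2, 0)$ via $r_2(0) = 1$. This is $W_2(s)$, and the integral piece is already in the stated Mellin--Barnes form.

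The Euler-product factorization of $\sum_n r_2(n)^2 / n^{s+2}$ inside $W_2(s)$ then follows from the observation that $r_2(n)/4 = \sum_{d \mid n} \chi(d)$ is multiplicative, so Ramanujan's identity $\sum_n \sigma_\chi(n)^2/n^s = \zeta(s)^2 L(s,\chi)^2/\zeta(2s)$ holds away from the ramified prime $p = 2$, and a direct local check at $p = 2$ (where $\chi(2) = 0$) introduces the correction factor $(1 + 2^{-s})^{-1}$; multiplication by $16$ produces the stated formula. The local normal convergence of $Z_2(s, w)$ for $\Re s > 2$ and $\Re w \geq 0$ is verified by the pointwise bound $r_2(k) \ll_\epsilon k^\epsilon$ together with a direct estimate of $\sum_h \sum_n (n+h)^{-\Re s + \epsilon} h^{-\Re w}$. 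The main obstacle is not conceptual but combinatorial: tracking how the off-diagonal $(m_1, m_2)$-pairs with a zero coordinate precisely supply the $n = 0$ portion of $Z_2(s+2, 0)$, and verifying that all interchanges of summation and integration remain valid throughout the strip $1 < \sigma < \Re(s-1)$.
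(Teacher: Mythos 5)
Your proposal is correct and follows essentially the same route as the paper, which establishes the Euler-product identity for $\sum_n r_2(n)^2 n^{-s}$ and then invokes the proof of~\cite[Proposition~2.2]{HulseGaussSphere} verbatim --- precisely the expand-the-square, interchange, peel off the $(0,0)$ term, and Mellin--Barnes-on-the-tail argument you spell out, including the local correction factor $(1+2^{-s})^{-1}$ at the ramified prime. The only difference is that you have written out the details the paper delegates to the earlier reference.
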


\begin{proof}
	We first note the aesthetic result that
\begin{equation} \label{diagonal_closed_form}
  \sum_{n \geq 1} \frac{r_2(n)^2}{n^s}
  =
  \frac{16\zeta(s)^2L(s,\chi)^2}{(1+2^{-s})\zeta(2s)}.
\end{equation}
  Again using that $r_2(n)/4$ is multiplicative, this identity can be easily
  checked by comparing Euler products.
  Given this, the proof of~\cite[Proposition~2.2]{HulseGaussSphere} applies verbatim.
\end{proof}

In $W_2(s)$, the first term $\sum r_2(n)^2 n^{-s-2}$ has a double pole at $s = -1$, coming from the factor $\zeta^2(s)$ in the numerator.
This behavior is unique to the dimension $2$ case, as the rightmost pole of the analogous function, $\sum r_k(n)^2 n^{-s-k}$, is simple for all $k \geq 3$.

\section{Meromorphic Continuation of $D_2(s;h)$ and $Z_2(s,w)$}%
\label{sec:MeroCont}

In this section, we explain how to obtain the meromorphic continuations of the singly-summed shifted convolutions
\begin{equation*}
  D_2(s; h) := \sum_{n \geq 0} \frac{r_2(n+h) r_2(n)}{(n+h)^s},
\end{equation*}
as well as the doubly-summed shifted convolution
\begin{equation*}
  Z_2(s,w) := \sum_{h \geq 1} \sum_{n \geq 0} \frac{r_2(n+h) r_2(n)}{(n+h)^s h^w} = \sum_{h \geq 1} \frac{D_2(s; h)}{h^w}.
\end{equation*}

These constructions follow analogous work in~\cite{HoffsteinHulse13}
and~\cite{HulseGaussSphere}. More broadly, this analysis is based upon the
strategy of recognizing the series as an inner product with Poincar\'e series
and using the spectral expansion of the Poincar\'e series to understand the
shifted convolutions as a sum over a basis of automorphic forms; this strategy
has been used before to study shifted convolutions, including the appendix
of~\cite{sarnak2001estimates} and the work of Blomer and
Harcos~\cite{blomer2008spectral}, and others. Most of the previous work has
focused on shifted convolutions of cusp forms, and we describe the necessary
modifications here.

Further, a major distinction between the traditional Gauss circle problem and
the generalized Gauss circle problems in dimension $k \geq 3$ becomes apparent
in this section.

Let $P_h(z,s)$ denote the Poincar\'e series
\begin{equation*}
  P_h(z,s) = \sum_{\Gamma_\infty \backslash \Gamma_0(4)} \Im(\gamma z)^s e(h \gamma z),
\end{equation*}
where we use the common abbreviation $e(z) = \exp(2\pi i z)$.
Let $\theta(z)$ denote the standard theta function
as in~\eqref{eq:standard_theta}.
Note that $\theta(z)$
is a modular form of weight $\tfrac{1}{2}$ on $\Gamma_0(4) \backslash \mathcal{H}$.
A classic unfolding argument shows that for $\Re s$ sufficiently large,
\begin{equation}\label{eq:inner_product_initial}
\begin{split}
  \langle \lvert \theta^2 \rvert^2 \Im(\cdot), P_h(\cdot, \overline{s}) \rangle
  =\!&
  \int_{\Gamma_0(4) \backslash \mathcal{H}}\! \lvert \theta^2(z) \rvert^2 y \overline{P_h(z, \overline{s})} d\mu(z) 
  =
  \frac{\Gamma(s)D_2(s;h)}{(4\pi)^s},
\end{split}
\end{equation}
where $\langle \cdot, \cdot \rangle$ denotes the Petersson inner product on $\Gamma_0(4) \backslash \mathcal{H}$ and $d\mu(z) = dx\,dy/y^2$ is the corresponding Haar measure.
After dividing by $h^w$ and summing over $h$, one recovers $Z_2(s,w)$.

To understand the meromorphic properties of $D_2(s; h)$ and $Z_2(s,w)$, we perform a spectral expansion on $P_h(z,s)$.
However, it is not possible to immediately replace $P_h$ by its spectral expansion in the inner product because $\lvert \theta^2(z) \rvert^2 y \not \in L^2(\Gamma_0(4) \backslash \mathcal{H})$.
It is necessary to modify $\lvert \theta^2 \rvert^2 y$ to be square integrable.
In~\cite{HulseGaussSphere}, this was accomplished by subtracting appropriate Eisenstein series evaluated at specific parameters.
But in dimension $2$, the na\"{\i}ve choices of Eisenstein series would be evaluated at poles, so it is necessary to present a new approach.

\subsection{Modifying $\lvert \theta^2 \rvert^2 y$ to be Square Integrable}

Let $E_\mathfrak{a}(z,s)$ denote the Eisenstein series associated to the cusp $\mathfrak{a}$ of $\Gamma_0(4)\backslash \mathcal{H}$, given by
\begin{equation*}
  E_\mathfrak{a} (z,s) = \sum_{\gamma \in \Gamma_\mathfrak{a} \backslash \Gamma_0(4)} \Im(\sigma_\mathfrak{a}^{-1} \gamma z)^s,
\end{equation*}
where $\Gamma_\mathfrak{a} \subset \Gamma_0(4)$ is the stabilizer of the cusp $\mathfrak{a}$, and $\sigma_\mathfrak{a} \in \PSL_2(\mathbb{R})$ satisfies $\sigma_\mathfrak{a} \infty = \mathfrak{a}$ and induces an isomorphism $\Gamma_\mathfrak{a} \cong \Gamma_\infty$ through conjugation.
The quotient $\Gamma_0(4) \backslash \mathcal{H}$ has three cusps, which can be represented as $0, \frac{1}{2}$, and $\infty$.

The Eisenstein series $E_\mathfrak{a}(z,s)$ have Fourier expansions
around the cusp $\mathfrak{b}$
of the form
\begin{equation}\label{eq:Eisenstein_expansion}
\begin{split}
  E_\mathfrak{a}(\sigma_\mathfrak{b} z,s)
  &=
  \delta_{[\mathfrak{a}=\mathfrak{b}]} y^s
  +
  \pi^\frac{1}{2} \frac{\Gamma(s-\frac{1}{2})}{\Gamma(s)} \varphi_{\mathfrak{a}\mathfrak{b}0}(s) y^{1-s}
  \\
  &\quad + \frac{2\pi^s y^\frac{1}{2}}{\Gamma(s)}
  \sum_{n \neq 0}\varphi_{\mathfrak{a}\mathfrak{b} n}(s) \vert n \vert^{s-\frac{1}{2}} K_{s-\frac{1}{2}}(2\pi \vert n \vert y) e(nx),
\end{split}
\end{equation}
in which the coefficients $\varphi_{\mathfrak{a}\mathfrak{b}n}(s)$ are described in~\cite{DeshouillersIwaniec82}, for example.
Here and throughout, we use $\delta_{[\text{condition}]}$ as a Kronecker delta, which is $1$ if the condition is true and is otherwise $0$.

When $\mathfrak{b} = \infty$, we will write these coefficients as $\varphi_{\mathfrak{a} n}(s)$.
Then $\varphi_{\mathfrak{a}h}$ takes the form~\cite{DeshouillersIwaniec82}
\begin{align*}
  \varphi_{\mathfrak{a} h}(t) = \left(\frac{(v,4/v)}{4v}\right)^t \sum_{(\gamma, 4/v) =1}^\infty \gamma^{-2t} \hspace{-3 mm}\sum_{\substack{\delta (\gamma v)^*\\ \gamma\delta \equiv u \!\!\!\!\!\mod (v,4/v)}}\! e\left(\frac{h\delta}{\gamma v}\right).
\end{align*}
Straightforward computations (similar to those in~\cite[\S3.1]{Goldfeld06}) show that the coefficients are given by
\begin{equation}\label{eq:Eisen_coeffs}
\begin{split}
  \varphi_{0h}(t) &= \frac{\sigma^{(2)}_{1-2t}(h)}{4^t\zeta^{(2)}(2t)}, \qquad \varphi_{\frac{1}{2}h}(t) =  \frac{(-1)^h\sigma^{(2)}_{1-2t}(h)}{4^t\zeta^{(2)}(2t)}, \\
  \varphi_{\infty h}(t) &= \frac{2^{2-4t}\sigma_{1-2t}(\frac{h}{4}) - 2^{1-4t}\sigma_{1-2t}(\frac{h}{2})}{\zeta^{(2)}(2t)},
\end{split}
\end{equation}
when $h \neq 0$, where $\zeta^{(2)}(t)$ is the Riemann zeta function with its $2$-factor removed, $\sigma_\nu(h)$ is the sum of divisors function, and $\sigma_\nu^{(2)}(h)$ is the sum of odd-divisors function.
If $h/2$ is not an integer, the corresponding divisor sum $\sigma_\nu(h/2)$  is defined to be zero, and similarly for $h/4$.
\begin{lemma}
  Define $\mathcal{V}(z)$ by%
  \begin{equation}\label{eq:def_V}
    \mathcal{V}(z) = \lvert \theta^2(z) \rvert^2 \Im(z) - \Res_{u = 1}\frac{\left( E_\infty(z,u) +E_0(z,u)\right)}{u-1}.
  \end{equation}
  Then $\mathcal{V}(z) \in L^2(\Gamma_0(4) \backslash \mathcal{H})$.
\end{lemma}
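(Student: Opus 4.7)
The plan is to verify that $\mathcal{V}(z)$ decays rapidly at each of the three cusps $\infty$, $0$, $\tfrac12$ of $\Gamma_0(4) \backslash \mathcal{H}$, which will give $L^2$. Since $|\theta^2|^2 \Im(\cdot)$ and each $E_\mathfrak{a}(\cdot, u)$ are $\Gamma_0(4)$-invariant, so is $\mathcal{V}$; hence it suffices to show that $\mathcal{V}(\sigma_\mathfrak{b} z) = o(y^{1/2})$ as $y \to \infty$ at each cusp $\mathfrak{b}$, which will make $|\mathcal{V}|^2$ integrable against $dx\,dy/y^2$ near each cusp.

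First I would determine the leading behavior of $|\theta^2(\sigma_\mathfrak{b} z)|^2 \Im(\sigma_\mathfrak{b} z)$ at each cusp. At $\infty$, $\theta^2(z) = 1 + \sum_{n \geq 1} r_2(n) e(nz)$, so the zeroth (in $x$) Fourier coefficient of $|\theta^2|^2 y$ is $y + O(y e^{-4\pi y})$. At $0$, the classical transformation $\theta^2(-1/(4z)) = -2iz\, \theta^2(z)$ combined with $\Im(-1/(4z)) = y/(4|z|^2)$ shows $|\theta^2|^2 \Im(\cdot)$ is even Fricke-invariant, so its zeroth coefficient at $0$ is again $y + O(y e^{-cy})$. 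At $\tfrac12$, $\theta$ vanishes, so $|\theta^2(\sigma_{1/2} z)|^2 y$ decays exponentially. The non-zero Fourier modes of $|\theta^2|^2 y$ decay exponentially at every cusp since $\theta^2$ is holomorphic.

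Next I would expand each Eisenstein series near $u = 1$ as $E_\mathfrak{a}(z, u) = c/(u-1) + \mathcal{E}_\mathfrak{a}(z) + O(u-1)$ (the residue $c$ being the reciprocal volume, independent of $\mathfrak{a}$), so that $\Res_{u=1}\frac{E_\mathfrak{a}(z,u)}{u-1} = \mathcal{E}_\mathfrak{a}(z)$. Using~\eqref{eq:Eisenstein_expansion} and~\eqref{eq:Eisen_coeffs} together with $y^u = y + (u-1)y\log y + \cdots$ and $y^{1-u} = 1 - (u-1)\log y + \cdots$, and with the Laurent expansions of $\varphi_{\mathfrak{a}\mathfrak{b}0}(u)$ and $\pi^{1/2}\Gamma(u-\tfrac12)/\Gamma(u)$ at $u = 1$, I would read off the zeroth Fourier coefficient of $\mathcal{E}_\mathfrak{a}(\sigma_\mathfrak{b} z)$ as $\delta_{[\mathfrak{a}=\mathfrak{b}]}\, y + O(\log y + 1)$, with the linear contribution coming entirely from the $y^u$ term. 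Summing, $(\mathcal{E}_\infty + \mathcal{E}_0)(\sigma_\mathfrak{b} z)$ has zeroth coefficient $y + O(\log y + 1)$ at $\infty$, $y + O(\log y + 1)$ at $0$, and $O(\log y + 1)$ at $\tfrac12$. Subtracting from $|\theta^2|^2 y$ kills the $y$ growth at each cusp, leaving a zeroth Fourier coefficient of $\mathcal{V}(\sigma_\mathfrak{b} z)$ of size $O(\log y + 1)$; the non-zero Fourier coefficients of $\mathcal{E}_\mathfrak{a}$ decay exponentially due to the $K$-Bessel factors in~\eqref{eq:Eisenstein_expansion}. All remaining contributions are comfortably $L^2$ against $dx\,dy/y^2$, yielding $\mathcal{V} \in L^2$.

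The main obstacle is the bookkeeping in the cancellation step: matching linear-in-$y$ coefficients between $|\theta^2|^2 y$ and $\mathcal{E}_\infty + \mathcal{E}_0$ at $\infty$, at $0$, and confirming via~\eqref{eq:Eisen_coeffs} that no stray $y$-growth appears at $\tfrac12$. This delicate matching is precisely why the combination $E_\infty + E_0$ (rather than, say, $E_\infty$ alone) appears in~\eqref{eq:def_V}, and is the essential new ingredient in dimension two: in dimensions $k \geq 3$, the analogous Eisenstein subtraction could be performed at a non-polar spectral parameter, sidestepping the Kronecker limit functions entirely.
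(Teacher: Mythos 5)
Your proposal is correct and follows essentially the same route as the paper's proof: establish the $y + O(e^{-cy})$ growth of $\lvert\theta^2\rvert^2\Im(\cdot)$ at the cusps $\infty$ and $0$ (and exponential decay at $\tfrac12$), then use the Fourier expansion~\eqref{eq:Eisenstein_expansion} to see that the constant Laurent term of $E_\mathfrak{a}(\cdot,u)$ at $u=1$ carries the matching linear term $\delta_{[\mathfrak{a}=\mathfrak{b}]}y$ plus only logarithmic and constant secondary terms, so the subtraction leaves $O(\log y)$ at every cusp. Your write-up is in fact more detailed than the paper's (which asserts the Laurent-expansion bookkeeping rather than carrying it out), and your identification of which term produces the linear growth and why the combination $E_\infty+E_0$ is the right one is accurate.
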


The use of constant terms in Laurent expansions of Eisenstein series to modify the growth of functions at cusps is not new, and has been used for example in~\cite[\S6]{HulseKiralKuanLim16} and~\cite[\S5]{LowryDudaThesis} in a similar manner.

\begin{proof}

It is a classical result that as $y \to \infty$,
\begin{align*}
  \lvert \theta^2(z) \rvert^2 \Im(z) = \lvert \theta^2(\sigma_0 z) \rvert^2 \Im(\sigma_0 z)= y(1 + O(e^{-2\pi y})),
\end{align*}
and that $\theta(z)$ has exponential decay at the cusp $\tfrac{1}{2}$.
From the expansion~\eqref{eq:Eisenstein_expansion} and asymptotics of the $K$-Bessel function, we see that
\begin{equation}\label{eq:Eisenstein_expansion_simple}
  E_{\mathfrak{a}}(\sigma_{\mathfrak{b}}z, u)
  =
  \delta_{[\mathfrak{a} = \mathfrak{b}]} y^u + \pi^{\frac{1}{2}} \frac{\Gamma(u - \frac{1}{2})}{\Gamma(u)} \varphi_{\mathfrak{a} \mathfrak{b} 0}(u) y^{1-u} + O(e^{-2\pi y}).
\end{equation}

It is therefore natural to attempt to mollify the growth of $\lvert \theta^2(z) \rvert^2 y$ at the $0$ and $\infty$ cusps by subtracting $E_\infty(z,1)$ and $E_0(z,1)$, but both $E_\infty(z,u)$ and $E_0(z,u)$ have poles at $u = 1$.
In particular, $\varphi_{\mathfrak{a} 0}(u)$ has a simple pole at $u = 1$ in both cases.
Referring to~\eqref{eq:Eisenstein_expansion} and~\eqref{eq:Eisenstein_expansion_simple}, it is clear that ${\Res_{u = 1} (u-1)^{-1}E_\infty(z,u)}$  has leading term $y$, and secondary terms that are logarithmic and constant in $y$, and is otherwise of rapid decay (and similarly for $E_0$ with respect to the $0$ cusp).
As the constant terms of these Laurent expansions are modular, we conclude that
\begin{equation*}
  \mathcal{V}(z) := \lvert \theta^2(z) \rvert^2 y - \Res_{u = 1} \frac{E_\infty(z,u) +  E_0(z,u)}{u-1} \in L^2(\Gamma_0(4) \backslash \mathcal{H}),
\end{equation*}
which proves the lemma.
\end{proof}

\subsection{Modified Inner Product Representation}

We will use the modified function $\mathcal{V}(z)$ instead of $\lvert \theta^2(z) \rvert^2 y$ to study the meromorphic properties of $Z_2(s,w)$.
Replacing~\eqref{eq:inner_product_initial} with $\mathcal{V}$ shows that
\begin{equation*}
  \langle \mathcal{V}, P_h(\cdot, \overline{s}) \rangle = \frac{\Gamma(s)}{(4\pi)^s} D_2(s; h)
  -
  \left\langle \Res_{u = 1} \left(\frac{E_\infty(\cdot, u) + E_0(\cdot, u)}{u-1} \right), P_h(\cdot, \overline{s}) \right\rangle.
\end{equation*}
The inner product of the Eisenstein series against the Poincar\'{e} series can be directly computed (by unfolding and applying~\cite[6.621(3)]{GradshteynRyzhik07}) to be
\begin{equation}\label{eq:Eisenstein_against_Poincare}
  \langle E_\mathfrak{a}(\cdot, u), P_h(\cdot, \overline{s}) \rangle = \frac{2\pi^{u + \frac{1}{2}}}{(4\pi h)^{s - \frac{1}{2}}} h^{u - \frac{1}{2}} \varphi_{\mathfrak{a}h}(u) \frac{\Gamma(s + u - 1) \Gamma(s - u)}{\Gamma(s) \Gamma(u)},
\end{equation}
provided that $\Re s + u - 1 > 0$ and that $\Re u$ is sufficiently large.
The equality~\eqref{eq:Eisenstein_against_Poincare} may be subsequently extended by meromorphic continuation.
After some simplification, we have
\begin{equation*}
\begin{split}
  \left\langle \Res_{u = 1} \left(\frac{E_\infty(\cdot, u) + E_0(\cdot, u)}{u-1}\right), P_h(\cdot, \overline{s}) \right\rangle
  &=
  \Res_{u = 1} \frac{\langle E_\infty(\cdot, u) + E_0(\cdot, u), P_h(\cdot, \overline{s}) \rangle}{u-1} \\
  &= \frac{\pi \Gamma(s-1)}{(4\pi h)^{s-1}} \big(\varphi_{\infty h}(1) + \varphi_{0 h}(1) \big).
\end{split}
\end{equation*}
Here we have used that the coefficients $\varphi_{\mathfrak{a}h}(u)$ are holomorphic at $u = 1$ as long as $h \geq 1$, as can be seen from~\eqref{eq:Eisen_coeffs}.

This shows that
\begin{equation}\label{eq:Dsh_direct_exp}
  D_2(s;h)
  =
  \frac{(4\pi)^s}{\Gamma(s)} \langle \mathcal{V}, P_h(\cdot, \overline{s}) \rangle
  +
  \frac{4\pi^2}{s-1} \frac{\varphi_{\infty h}(1) + \varphi_{0 h}(1)}{h^{s-1}}.
\end{equation}
Dividing by $h^w$ and summing over $h \geq 1$ gives that
\begin{equation}\label{eq:Zsw_direct_exp}
  Z_2(s,w)
  =
  \frac{(4\pi)^s}{\Gamma(s)} \sum_{h \geq 1} \frac{\langle \mathcal{V}, P_h(\cdot, \overline{s}) \rangle}{h^w}
  +
  \frac{4\pi^2}{s-1} \sum_{h \geq 1} \frac{\varphi_{\infty h}(1) + \varphi_{0 h}(1)}{h^{s + w - 1}}.
\end{equation}

\begin{remark}
The difference between the expansion~\eqref{eq:Zsw_direct_exp} and its higher-dimensional analogue from equation~(3.7) in~\cite{HulseGaussSphere} is purely technical, and these expressions should be directly compared.
Indeed, the remainder of the description of the meromorphic properties of $Z_2$ is essentially the same as the description of $Z_k$ for $k \geq 3$, except at times when restricting to even dimension allows for greater simplification.
\end{remark}

\subsection{Spectral Expansion}

We now provide a spectral expansion of the Poincar\'e series $P_h(z,s)$ and insert this expansion into~\eqref{eq:Dsh_direct_exp} and~\eqref{eq:Zsw_direct_exp}.
Regarding $\mathcal{V}$ as a generic modular, square-integrable function, this is identical to the spectral expansion that appears in~\cite{HulseGaussSphere}.
We introduce the necessary notation to describe and state the spectral expansion, but we defer to~\cite[\S3.2]{HulseGaussSphere} for the proof.

The Poincar\'{e} series $P_h(z,s)$ has a spectral expansion (as given in~\cite[Theorem 15.5]{IwaniecKowalski04}) of the form \begin{align}
\begin{split} \label{eq:spectral_expansion_P}
    P_h(z,s) &= \sum_j \langle P_h(\cdot,s),\mu_j \rangle \mu_j(z)
    \\
    &\quad + \sum_\mathfrak{a} \frac{1}{4\pi}\int_{-\infty}^\infty
    \langle P_h(\cdot,s),E_\mathfrak{a}(\cdot,\tfrac{1}{2}+it)\rangle E_\mathfrak{a}(z,\tfrac{1}{2}+it)\, dt,
\end{split}
\end{align}
in which $\mathfrak{a}$ ranges over the three cusps of $\Gamma_0(4)\backslash \mathcal{H}$, and $\{\mu_j\}$ denotes an orthonormal basis
of the residual and cuspidal spaces, consisting of the constant form $\mu_0$ and of Hecke-Maass forms $\mu_j$ for $L^2(\Gamma_0(4) \backslash \mathcal{H})$ with associated types $\frac{1}{2}+it_j$. 
The inner product against the constant term $\mu_0$ vanishes, so we omit further consideration of it.
The Maass forms $\mu_j$ admit Fourier expansions
\begin{equation*}
  \mu_j(z) = \sum_{n \neq 0} \rho_j(n) y^{\frac{1}{2}} K_{it_j} (2\pi \lvert n \rvert y) e(nx),
\end{equation*}
where $e(x) = e^{2\pi i x}$, and have associated eigenvalues $\lambda_j(n)$ and $L$-functions
\begin{equation*}
  L(s, \mu_j) = \sum_{n \geq 1} \frac{\rho_j(n)}{n^s}.
\end{equation*}

The inner product $\langle P_h(\cdot,s), \mu_j \rangle$ decomposes mainly as a
product of gamma functions and $\rho_j(h)$, which has uniform exponential decay
in the $t_j$ aspect when $s$ is constrained to vertical strips.
Similarly, $\langle P_h(\cdot,s), E_\mathfrak{a}(\cdot,\tfrac{1}{2}+it)\rangle$
has uniform exponential decay in the $t$ aspect.
Thus the expansion converges locally uniformly.

Inserting the spectral expansion~\eqref{eq:spectral_expansion_P} into the
expression for $D_2(s;h)$ in~\eqref{eq:Dsh_direct_exp} and the expression for
$Z_2(s,w)$ in~\eqref{eq:Zsw_direct_exp} proves the following theorem.

\begin{theorem}

For $\Re s$ sufficiently large, the singly-summed shifted convolution $D_2(s;h)$ can be written as
  \begin{align}
  D_2(s;h) &= \frac{4 \pi^2}{s-1} \frac{\big(\varphi_{\infty h}(1) + \varphi_{0 h}(1)\big)}{h^{s-1}} \notag
  \\
  &\quad +
  \frac{2\pi}{h^{s-\frac{1}{2}}}
  \sum_j \rho_j(h) G(s, it_j)
  \langle \mathcal{V}, \mu_j \rangle \label{eq:D_spectral}
  \\
  &\quad + \sum_{\mathfrak{a}} \frac{1 }{i}
  \int_{(0)} \frac{\pi^{\frac{1}{2} + z} \overline{\varphi_{\mathfrak{a} h} (\frac{1}{2} - z)} G(s, z)}{h^{s - \frac{1}{2} - z} \Gamma(\frac{1}{2} + z)}
  \langle \mathcal{V}, E_\mathfrak{a}(\cdot, \tfrac{1}{2} - \overline{z}) \rangle \, dz, \notag
  \end{align}
and for $\Re w$ also sufficiently large, the doubly-summed shifted convolution $Z_2(s,w)$ can be written as
  \begin{align}
  Z_2(s,w)
  &=
  \frac{4\pi^2}{s-1} \sum_{h \geq 1} \frac{\big(\varphi_{\infty h}(1) + \varphi_{0 h}(1)\big)}{h^{w+s-1}} \notag
  \\
  &\quad + 2\pi \sum_j L(s+w-\tfrac{1}{2},\mu_j) G(s,it_j)
  \langle \mathcal{V},\mu_j \rangle \label{eq:Z_spectral} \\
  &\quad + \sum_{\mathfrak{a}} \frac{1}{i} \int_{(0)} \frac{G(s,z)\pi^{\frac{1}{2}+z}}{\Gamma(\frac{1}{2}+z)}
  \sum_{h \geq 1} \frac{\overline{\varphi_{\mathfrak{a}h}(\frac{1}{2}-z)}}{h^{s+w-z-\frac{1}{2}}}\langle \mathcal{V},E_\mathfrak{a}(\cdot,\tfrac{1}{2}-\overline{z})\rangle \, dz. \notag
  \end{align}

In both expressions, $G(s,z)$ denotes the collected gamma factors
\begin{equation*}
  G(s,z):= \frac{\Gamma(s-\frac{1}{2}+z)\Gamma(s-\frac{1}{2}-z)}{\Gamma(s)^2}.
\end{equation*}
We refer to first lines of~\eqref{eq:D_spectral} and~\eqref{eq:Z_spectral} as
the ``non-spectral part,'' to the second lines as the ``discrete part,'' and to
the third lines as the ``continuous part'' of the spectrum of $D_2$ or $Z_2$,
respectively.
\end{theorem}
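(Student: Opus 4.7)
The plan is to start from the Eisenstein-mollified inner product identity~\eqref{eq:Dsh_direct_exp}, which has already isolated the non-spectral piece $\tfrac{4\pi^2}{s-1}(\varphi_{\infty h}(1)+\varphi_{0h}(1))/h^{s-1}$ originating from the two constant-in-$u$ residue terms built into $\mathcal{V}$. Because $\mathcal{V}\in L^2(\Gamma_0(4)\backslash\mathcal{H})$, one may substitute the spectral decomposition~\eqref{eq:spectral_expansion_P} of $P_h(\cdot,\overline{s})$ directly into the inner product $\langle\mathcal{V},P_h(\cdot,\overline{s})\rangle$ and exchange the order of the outer spectral sum/integral with the inner product against $\mathcal{V}$. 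This reduces the problem to the explicit evaluation of the Poincar\'{e}-to-Maass and Poincar\'{e}-to-Eisenstein inner products, with $\langle\mathcal{V},\mu_j\rangle$ and $\langle\mathcal{V},E_{\mathfrak{a}}(\cdot,\tfrac{1}{2}+it)\rangle$ carried along as opaque spectral coefficients of $\mathcal{V}$.

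Both of these Poincar\'{e} pairings I would compute by the classical unfolding trick: replacing $P_h$ by its defining sum over $\Gamma_\infty\backslash\Gamma_0(4)$ unfolds the integral from $\Gamma_0(4)\backslash\mathcal{H}$ onto the strip $\Gamma_\infty\backslash\mathcal{H}$, the resulting $x$-integral projects onto the $h$-th Fourier coefficient, and the surviving $y$-integral is a Mellin transform of $e^{-2\pi h y}K_{\nu}(2\pi h y)$ that evaluates in closed form via~\cite[6.621(3)]{GradshteynRyzhik07} at equal parameters to a product of three gamma functions. For $\mu_j$ this yields $\overline{\rho_j(h)}(4\pi h)^{-(\overline{s}-1/2)}\pi^{1/2}\Gamma(\overline{s}-\tfrac{1}{2}+it_j)\Gamma(\overline{s}-\tfrac{1}{2}-it_j)/\Gamma(\overline{s})$; for $E_{\mathfrak{a}}(\cdot,\tfrac{1}{2}+it)$ the analogous expression with $\overline{\rho_j(h)}$ replaced by $2\pi^{1/2-it}h^{-it}\overline{\varphi_{\mathfrak{a}h}(\tfrac{1}{2}+it)}/\Gamma(\tfrac{1}{2}-it)$ and $it_j$ replaced by $it$. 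Taking complex conjugates, using that $\rho_j(h)$ is real for Hecke eigenforms and that $\overline{\varphi_{\mathfrak{a}h}(u)}=\varphi_{\mathfrak{a}h}(\overline{u})$ by analytic continuation from the divisor-sum formulas~\eqref{eq:Eisen_coeffs}, and then absorbing the prefactor $(4\pi)^s/\Gamma(s)$, collapses the cuspidal contribution to exactly $\tfrac{2\pi\rho_j(h)}{h^{s-1/2}}G(s,it_j)\langle\mathcal{V},\mu_j\rangle$. After the cosmetic change of variable $z=it$ on the critical line, the continuous contribution assumes the shape displayed in~\eqref{eq:D_spectral} with the prefactor $G(s,z)\pi^{1/2+z}/\Gamma(\tfrac{1}{2}+z)$.

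The formula~\eqref{eq:Z_spectral} for $Z_2(s,w)$ then follows by dividing~\eqref{eq:D_spectral} by $h^w$ and summing over $h\geq 1$: the only $h$-dependence on the cuspidal side is $\rho_j(h)/h^{s+w-1/2}$, which collapses to $L(s+w-\tfrac{1}{2},\mu_j)$; on the continuous side the sum $\sum_h \overline{\varphi_{\mathfrak{a}h}(\tfrac{1}{2}-z)}/h^{s+w-1/2-z}$ is pulled inside the $z$-integral verbatim; and the non-spectral piece becomes $\sum_h(\varphi_{\infty h}(1)+\varphi_{0h}(1))/h^{s+w-1}$. The interchange of summation over $h$ with the spectral sum and integral is legitimate for $\Re s$ and $\Re w$ sufficiently large, since the exponential decay of $\langle\mathcal{V},\mu_j\rangle G(s,it_j)$ in $t_j$ and of $\langle\mathcal{V},E_{\mathfrak{a}}(\cdot,\tfrac{1}{2}+it)\rangle G(s,it)$ in $t$, combined with standard polynomial bounds on $\rho_j(h)$ and on the divisor-type coefficients $\varphi_{\mathfrak{a}h}$, gives absolute convergence. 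Accordingly, I expect no genuine analytic obstacle: the main difficulty is purely bookkeeping, namely tracking the complex conjugates on $s$ and on the spectral parameter and assembling the correct powers of $\pi$, $h$, and $\Gamma$-factors into the packaged quantities $G(s,z)$ and $\pi^{1/2+z}/\Gamma(\tfrac{1}{2}+z)$ that appear in the theorem.
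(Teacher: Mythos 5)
Your proposal is correct and follows essentially the same route as the paper: the authors likewise insert the spectral expansion~\eqref{eq:spectral_expansion_P} of $P_h(\cdot,\overline{s})$ into the mollified inner-product identity~\eqref{eq:Dsh_direct_exp}, evaluate the Poincar\'e-against-Maass and Poincar\'e-against-Eisenstein pairings by unfolding and~\cite[6.621(3)]{GradshteynRyzhik07}, and then divide by $h^w$ and sum over $h$ to obtain~\eqref{eq:Z_spectral} (deferring the unfolding details to~\cite[\S3.2]{HulseGaussSphere}). The only cosmetic difference is that your appeal to the reality of $\rho_j(h)$ is unnecessary, since the conjugation already restores $\rho_j(h)$ when the coefficient $\langle P_h,\mu_j\rangle$ is paired against $\mathcal{V}$.
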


Summing over $h$ has resulted in the appearance of the $L$-functions associated
to the Maass forms and Eisenstein series of the spectral expansion, and thus
yielded objects whose meromorphic properties are largely understood. This
phenomenon has been used before in the subconvexity results of Michel and
Harcos~\cite{michel2004subconvexity, harcos2006subconvexity}.

\subsection{Meromorphic Continuation of $D_2(s; h)$ and $Z_2(s,w)$}\label{ssec:D2_Z2_mero}

The description of the meromorphic continuation of $Z_2(s,w)$ can be obtained from the meromorphic continuation of $Z_k(s,w)$ as given in~\cite[\S3.3]{HulseGaussSphere} by specializing to $k = 2$, using the modified $\mathcal{V}$ as defined in~\eqref{eq:def_V}, and tracking changes in the non-spectral part.
The single shifted convolution $D_2(s;h)$ is described only implicitly there, so we
describe its properties explicitly and sketch the proofs here.

\begin{lemma}
  The single-sum shifted convolution $D_2(s;h)$ has meromorphic continuation to $\mathbb{C}$.
  The rightmost pole occurs at $s = 1$, coming from the non-spectral part.
  The function $D_2(s;h)$ is otherwise analytic in $\Re s > \tfrac{1}{2}$, though on the line $\Re s = \tfrac{1}{2}$ there is a line of poles appearing in the discrete part of the spectrum of $D_2$.
\end{lemma}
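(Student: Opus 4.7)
The plan is to analyze each of the three pieces of the spectral expansion~\eqref{eq:D_spectral} for $D_2(s;h)$ separately, starting from the region of absolute convergence and extending leftward. For the \emph{non-spectral part} $\tfrac{4\pi^2}{s-1} h^{1-s}(\varphi_{\infty h}(1) + \varphi_{0h}(1))$, the explicit formulas in~\eqref{eq:Eisen_coeffs} show that the coefficients $\varphi_{\mathfrak{a}h}(1)$ are finite for every $h \geq 1$, so this piece equals an entire function of $s$ divided by $s-1$ and contributes exactly a simple pole at $s = 1$. This will be the rightmost pole of $D_2(s;h)$.

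For the \emph{discrete part} $2\pi h^{1/2-s}\sum_j \rho_j(h) G(s, it_j)\langle \mathcal{V}, \mu_j\rangle$, I first justify convergence in vertical strips: Stirling applied to $G(s, z) = \Gamma(s-\tfrac12+z)\Gamma(s-\tfrac12-z)/\Gamma(s)^2$ gives exponential decay $|G(s, it_j)| \ll e^{-\pi|t_j|}|t_j|^{2\Re s - 2}$, while $\rho_j(h)$ and $\langle \mathcal{V}, \mu_j\rangle$ grow only polynomially in $t_j$ by spectral large-sieve and Rankin--Selberg bounds. The only $s$-dependence is in $G(s, it_j)$; since $\Gamma(s)^{-2}$ is entire, $G(s, it_j)$ contributes simple poles exactly at $s = \tfrac12 \pm it_j - n$ for integers $n \geq 0$. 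None of these lie in $\Re s > \tfrac12$, and the rightmost of them lie on $\Re s = \tfrac12$, furnishing the advertised line of spectral poles.

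For the \emph{continuous part}, I view each term as a contour integral on $\Re z = 0$ and shift the contour to continue $D_2(s;h)$ past $\Re s = \tfrac12$. Exponential decay of $G(s, z)$ in $|\Im z|$ together with uniform polynomial bounds on $\varphi_{\mathfrak{a}h}(\tfrac12 - z)$ and on $\langle \mathcal{V}, E_\mathfrak{a}(\cdot, \tfrac12 - \overline{z})\rangle$ along vertical lines justify the shifts. The integrand's $z$-poles arise from $G(s, z)$ at $z = \pm(\tfrac12 - s) - n$ and potentially from the Eisenstein inner product; crucially, the residual pole at the spectral parameter $u = 1$ has been subtracted off by design of $\mathcal{V}$ in~\eqref{eq:def_V}, so the residues picked up upon shifting are meromorphic in $s$ and produce no additional poles in $\Re s > \tfrac12$.

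The principal obstacle is the bookkeeping in the continuous part: verifying that the $z$-poles of the Eisenstein inner product induced by the subtracted residual terms in~\eqref{eq:def_V} cancel cleanly, so that $D_2(s;h)$ remains holomorphic throughout the strip $\tfrac12 < \Re s < 1$ apart from the simple pole at $s=1$. This is the distinctive feature of the dimension $2$ setting, since in~\cite{HulseGaussSphere} the analogous subtraction was performed at ordinary spectral points rather than at a residual one. Once these cancellations are confirmed, assembling the three contributions yields the stated meromorphic continuation of $D_2(s;h)$ to $\mathbb{C}$ with the claimed pole structure.
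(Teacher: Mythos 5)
Your proposal follows essentially the same route as the paper: treat the non-spectral, discrete, and continuous parts of the expansion~\eqref{eq:D_spectral} separately, read off the simple pole at $s=1$ from the non-spectral term, locate the discrete-spectrum poles at the poles of $G(s,it_j)$, and continue the continuous part by iteratively shifting the $z$-contour and collecting residues (the paper likewise defers this bookkeeping to the methodology of its predecessors). The one assertion you leave unjustified is that none of the poles $s=\tfrac12\pm it_j - n$ lies in $\Re s>\tfrac12$: a priori an exceptional eigenvalue $\lambda_j<\tfrac14$ would make $t_j$ purely imaginary and push a pole to the right of that line, and the paper closes this by citing that Selberg's eigenvalue conjecture is known for $\Gamma_0(4)$, so all $t_j$ are real.
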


We consider the non-spectral, discrete, and continuous parts separately.
As there is only a single sum, the analysis is significantly simpler than the analysis of $Z_2(s,w)$.

\begin{proof}
  The meromorphic continuation of the non-spectral part of $D_2(s;h)$ is trivial, and we see a unique simple pole at $s=1$ with residue
  \begin{equation}\label{eq:D_residue_1}
    \Res_{s = 1} D_2(s;h) = 4 \pi^2 (\varphi_{\infty h}(1) + \varphi_{0 h}(1)).
  \end{equation}

  In the discrete part of the spectrum, there are poles at $s = \tfrac{1}{2} \pm it_j$ coming from the gamma factors in $G(s, it_j)$.
  As Selberg's Eigenvalue Conjecture is known for $\Gamma_0(4)$~\cite{Huxley85}, these poles lie in $\Re s \leq \frac{1}{2}$.
  Note that for any fixed $s$, the gamma factors $G(s, it_j)$ have exponential decay in $t_j$, so the sum converges absolutely.

The integrand of the continuous part of the spectrum has poles at $s = \tfrac{1}{2} \pm z$ due to the gamma factors in $G(s, z)$.
  Note that for any fixed $s$, the gamma factors $G(s,z)$ have exponential decay in $z$, so that the integral converges absolutely.
  Proving the meromorphic continuation of the continuous part of the spectrum is subtle, but the methodology of~\cite[\S4.4.2]{HulseKuanLowryDudaWalker17} or~\cite[\S3.3.3]{HulseGaussSphere} of iteratively shifting lines of integration and picking up residual terms applies here.
\end{proof}

\begin{remark}\label{remark:chamizo_compare}
  It is interesting to note that each individual $D_2(s;h)$ has poles at $s =
  \tfrac{1}{2} \pm it_j$ from the discrete spectrum, while the complete sum
  $Z_2(s, 0)$ does not.  That is, by averaging over $h$, the leading line of
  poles vanishes. This provides a spectral explanation for the vastly simpler
  behavior of averaged sums. The improved behavior of these averaged sums was
  also observed by Chamizo~\cite[\S4]{Chamizo1999}, but not in a way that
  Ivi\'{c} could leverage in~\cite{Ivic1996} while making the current best-known
  bound for~\eqref{eq:convolution_ivic}.
\end{remark}

We summarize the meromorphic behavior of $Z_2(s,w)$.
The function $Z_2(s,0)$ will be further analyzed in sections~\S\ref{sec:discrete_spectral_W}-\ref{sec:continuous_spectral_W}.

\begin{lemma}\label{lem:Z2_mero_summary}
  The doubly-summed shifted convolution $Z_2(s,w)$ has meromorphic continuation to $\mathbb{C}^2$.
  In particular, the specialized shifted convolution $Z_2(s,0)$ has meromorphic continuation to the plane.
  For $\Re s > -\tfrac{1}{2}$, all poles of $Z_2(s,0)$ come from the non-spectral part (which has a simple pole at $s = 2$ and a double pole at $s = 1$) and the continuous part of the spectrum (whose poles appear within the residual terms $\mathcal{R}^{\pm}_j$, as defined in~\eqref{eq:continuous_mero_with_resids}).
\end{lemma}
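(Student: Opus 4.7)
The plan is to analyze each of the three pieces of the spectral expansion~\eqref{eq:Z_spectral} separately, specializing the argument of~\cite[\S3.3]{HulseGaussSphere} from arbitrary $k$ to $k=2$ and tracking the changes forced by using the modified $\mathcal{V}$ from~\eqref{eq:def_V}.

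First I would treat the non-spectral part. Inserting the explicit formulas~\eqref{eq:Eisen_coeffs} at $u=1$, a short calculation (swapping the order of summation and factoring the divisor sum) yields identities of the shape
\begin{equation*}
  \sum_{h\geq 1}\frac{\sigma^{(2)}_{-1}(h)}{h^{s+w-1}}=\zeta(s+w-1)\,\zeta^{(2)}(s+w),
\end{equation*}
together with analogous identities for the $\infty$-cusp coefficients involving extra factors of $4^{-(s+w-1)}$ and $2^{-(s+w-1)}$ coming from the restriction of $\sigma_{-1}(h/4)$ and $\sigma_{-1}(h/2)$ to multiples of $4$ and $2$. All of these Dirichlet series have meromorphic continuation to $\mathbb{C}^2$ with simple polar divisors along $s+w=1$ and $s+w=2$. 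Specializing to $w=0$ and multiplying by $(s-1)^{-1}$ then produces a simple pole at $s=2$ and, from the combination of the $\zeta(s)$-factors with the external $(s-1)^{-1}$, a double pole at $s=1$, and no other singularities in $\mathbb{C}$.

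Next, for the discrete part $2\pi\sum_j L(s+w-\tfrac{1}{2},\mu_j)G(s,it_j)\langle\mathcal{V},\mu_j\rangle$, each $L(\,\cdot\,,\mu_j)$ is entire since the $\mu_j$ are Hecke--Maass cusp forms, and $G(s,it_j)$ has poles only at $s=\tfrac{1}{2}\pm it_j$. Selberg's eigenvalue conjecture for $\Gamma_0(4)$~\cite{Huxley85} forces $t_j\in\mathbb{R}$, placing these poles on $\Re s=\tfrac{1}{2}$. The exponential decay of $G(s,it_j)$ in $t_j$, together with polynomial bounds on $L(s+w-\tfrac{1}{2},\mu_j)$ and the exponential decay of $\langle\mathcal{V},\mu_j\rangle$, ensures the sum converges absolutely on compacta avoiding those poles. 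In particular the discrete part contributes no poles to $Z_2(s,0)$ in $\Re s>\tfrac{1}{2}$ and, more generally, is holomorphic in $\mathbb{C}^2$ away from the spectral line $\Re s=\tfrac{1}{2}$.

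The main obstacle is the continuous part, where the $z$-contour must be pushed from $\Re z=0$ well into the left half-plane to expose the structure of $Z_2$ for smaller $\Re s$. The integrand carries $G(s,z)$, whose poles at $z=\pm(s-\tfrac{1}{2})$ cross $\Re z=0$ as $s$ moves left, together with the Dirichlet series $\sum_h\overline{\varphi_{\mathfrak{a}h}(\tfrac{1}{2}-z)}\,h^{-(s+w-z-1/2)}$, whose explicit zeta/$L$-factor structure introduces further polar behavior in $s$, $w$, and $z$. Following~\cite[\S3.3.3]{HulseGaussSphere} and~\cite[\S4.4.2]{HulseKuanLowryDudaWalker17}, I would shift the contour iteratively, collecting residual terms $\mathcal{R}^{\pm}_j$ at $z=\pm(s-\tfrac{1}{2})$ (and at any further crossings forced by the $\varphi_{\mathfrak{a}h}$-series), and then verify that the shifted integral defines a holomorphic function throughout $\Re s>-\tfrac{1}{2}$, with absolute convergence secured by exponential decay of $G$ in $\Im z$. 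Specializing to $w=0$ and assembling the three contributions yields the claimed meromorphic continuation of $Z_2(s,0)$ to $\mathbb{C}$ and confirms that in the strip $\Re s>-\tfrac{1}{2}$ every pole is accounted for by either the non-spectral poles at $s=1,2$ or by the residual terms $\mathcal{R}^{\pm}_j$ of the continuous part.
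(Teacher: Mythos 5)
Your treatment of the non-spectral and continuous parts matches the paper's: the explicit zeta-factorization of $\sum_h \varphi_{\mathfrak{a}h}(1)h^{-(s+w-1)}$ gives the simple pole at $s=2$ and double pole at $s=1$, and the continuous part is handled by iterative contour shifts producing the residual terms $\mathcal{R}^{\pm}_j$, exactly as in~\cite[\S3.3.3]{HulseGaussSphere}. However, there is a genuine gap in your discussion of the discrete part, and it is precisely the point the lemma turns on. You assert that the discrete part is ``holomorphic in $\mathbb{C}^2$ away from the spectral line $\Re s=\tfrac12$,'' but the line $\Re s=\tfrac12$ lies \emph{inside} the region $\Re s>-\tfrac12$ in which the lemma claims that \emph{all} poles of $Z_2(s,0)$ come from the non-spectral and continuous parts. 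As written, your argument either leaves the behavior at $s=\tfrac12\pm it_j$ undetermined or, worse, suggests that genuine poles sit there, which would contradict the statement being proved. (You also state that $G(s,it_j)$ has poles only at $s=\tfrac12\pm it_j$; in fact $\Gamma(s-\tfrac12\pm it_j)$ contributes poles at $s=\tfrac12-m\pm it_j$ for every $m\in\mathbb{Z}_{\geq 0}$, though only $m=0$ is relevant in the half-plane at issue.)

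The missing ingredient is the cancellation mechanism that kills the first line of apparent poles when $w=0$. For odd Maass forms $\mu_j$ the inner product $\langle\mathcal{V},\mu_j\rangle$ vanishes outright, since $\mathcal{V}$ is built from the even form $\Im(z)\lvert\theta^2(z)\rvert^2$ and Eisenstein series. For even Maass forms, the pole of $G(s,it_j)$ at $s=\tfrac12\pm it_j$ is cancelled by the trivial zero $L(\pm it_j,\mu_j)=0$ of the factor $L(s+w-\tfrac12,\mu_j)$ specialized at $w=0$ (the trivial zeros sit at $-2m\pm it_j$, $m\geq 0$). It is exactly this cancellation --- which is present for the averaged sum $Z_2(s,0)$ but absent for each individual $D_2(s;h)$ --- that pushes the first genuine discrete poles out to $s-\tfrac12\pm it_j=-m$ with $m$ odd, i.e.\ onto $\Re s=-\tfrac12$ and beyond, and thereby justifies the lemma's claim. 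Entirety of $L(\cdot,\mu_j)$ alone, which is all you invoke, does not suffice.
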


\subsubsection*{The non-spectral part.}

The non-spectral part can be described explicitly by computing the Dirichlet series associated to the coefficients $\varphi_{\mathfrak{a} h}(t)$.
Dividing by $h^w$ and summing over $h$ in~\eqref{eq:def_V}, we find that
\begin{equation}\label{eq:eisenstein_coeff_dirichlet_series}
  \begin{split}
    \sum_{h \geq 1} \frac{\varphi_{0h}(t)}{h^w} &= \frac{\zeta(w) \zeta^{(2)}(w-1+2t)}{4^t \zeta^{(2)}(2t)}, \\
    \sum_{h \geq 1} \frac{\varphi_{\frac{1}{2}h}(t)}{h^w} &= \frac{(2^{1-w}-1)\zeta(w)\zeta^{(2)}(w-1+2t)}{4^t \zeta^{(2)}(2t)},\\
    \sum_{h \geq 1} \frac{\varphi_{\infty h}(t)}{h^w} &= \frac{\zeta(w)\zeta(w-1+2t)}{2^{4t} \zeta^{(2)}(2t)} \left(\frac{1}{4^{w-1}}-\frac{1}{2^{w-1}} \right).
  \end{split}
\end{equation}
Thus the non-spectral part, as it appears in~\eqref{eq:Z_spectral}, can be written as
\begin{equation}\label{eq:nonspectral_sw_def}
  \frac{8 \zeta(s + w - 1) \zeta(s + w)}{(s-1)} (1 - 2^{1 - s - w} + 4^{1 - s - w}).
\end{equation}
This has clear meromorphic continuation to $\mathbb{C}^2$.
Specializing to $w = 0$, we note a simple pole at $s = 2$ and a double pole at $s = 1$.

\subsubsection*{The discrete spectrum}

The discrete part of the spectrum in~\eqref{eq:Z_spectral} has clear meromorphic continuation to the plane, coming from the meromorphic continuations of the $L$-functions $L(s, \mu_j)$ and the gamma functions.
Note that for any fixed $s$ away from poles, the gamma factor $G(s, it_j)$ has exponential decay in $t_j$ and the sum over $t_j$ converges absolutely.
Specializing to $w = 0$, we now analyze the poles.
The first line of apparent poles at $s = \tfrac{1}{2} \pm it_j$ do not actually occur.
For odd Maass forms $\mu_j$, the inner products $\langle \mathcal{V}, \mu_j \rangle$ vanish since $\mathcal{V}(z)$ is a linear combination of $\Im(z) |\theta^2(z)|^2$, an even form, and Eisenstein series at different cusps, both of which are orthogonal to $\mu_j$.
For even Maass forms $\mu_j$, the apparent poles are cancelled by trivial zeros of $L(s, \mu_j)$, as $L(-2m \pm it_j, \mu_j) = 0$ for any $m \in \mathbb{Z}_{\geq 0}$.
Thus the discrete part of the spectrum is analytic for $\Re s > -\tfrac{1}{2}$ and has poles at $s - \tfrac{1}{2} \pm it_j = -m$ for $m$ odd, $m \in \mathbb{Z}_{> 0}$.

\subsubsection*{The continuous spectrum}

The continuous part of the spectrum is the most nuanced.
For convenience, we rewrite the continuous component as
\begin{equation*}
  \frac{1}{i} \sum_{\mathfrak{a}}
  \int_{(0)} \frac{G(s,z)\pi^{\frac{1}{2}+z}}{\Gamma(\frac{1}{2}+z)}
  \zeta_\mathfrak{a}(s+w,z)
  \langle \mathcal{V},E_\mathfrak{a}(\cdot,\tfrac{1}{2}-\overline{z})\rangle \, dz,
\end{equation*}
in which $\zeta_\mathfrak{a}(s,z)$ is defined by
\begin{equation*}
  \zeta_\mathfrak{a}(s,z) = \sum_{h \geq 1} \frac{\overline{\varphi_{\mathfrak{a} h}(\frac{1}{2}-z)}}{h^{s-\frac{1}{2}-z}}.
\end{equation*}
We describe these Dirichlet series explicitly via~\eqref{eq:eisenstein_coeff_dirichlet_series} as
%
\begin{align*}
  \zeta_0(s,z&)= \frac{\zeta(s-\frac{1}{2}-z)\zeta^{(2)}(s-\frac{1}{2}+z)}{2^{1+2z} \zeta^{(2)}(1+2z)}, \\
  \zeta_\frac{1}{2}(s,z&) = \frac{\zeta(s-\frac{1}{2}-z)\zeta^{(2)}(s-\frac{1}{2}+z)}{2^{1+2z}\zeta^{(2)}(1+2z)}\left(\frac{2^{z}}{2^{s-\frac{3}{2}}}-1\right), \\
  \zeta_\infty(s,z&) =\frac{\zeta(s-\frac{1}{2}-z)\zeta(s-\frac{1}{2}+z)}{2^{2+4z}\zeta^{(2)}(1+2z)}\left(\frac{4^z}{4^{s-\frac{3}{2}}}-\frac{2^z}{2^{s-\frac{3}{2}}}\right).
\end{align*}

The integrand within the continuous component has apparent poles when $s + w - \tfrac{1}{2} \pm z = 1$ and when $s = \tfrac{1}{2} \pm z - j$ for $j \in \mathbb{Z}_{\geq 0}$.
In~\cite[\S3.3.3]{HulseGaussSphere}, it is proved that it is possible to meromorphically continue the continuous component past these apparent poles.
These apparent poles do not contribute poles at the expected locations, but instead introduce additional residual terms in the meromorphic continuation.
Overall, in the cases when $\Re(s+w) \neq \frac{3}{2}$ and $\Re(s) \neq \frac{1}{2}-j$, the meromorphic continuation of the continuous component is given by
\begin{align}\label{eq:continuous_mero_with_resids}
  &\frac{1}{i} \sum_{\mathfrak{a}}
  \int_{(0)} \frac{G(s,z)\pi^{\frac{1}{2}+z}}{\Gamma(\frac{1}{2}+z)}
  \zeta_\mathfrak{a}(s+w,z)
  \langle \mathcal{V},E_\mathfrak{a}(\cdot,\tfrac{1}{2}-\overline{z})\rangle \, dz \\
  &\quad + \delta_{[\Re(s+w) < \frac{3}{2}]}\big(\mathcal{R}^+_1(s,w) - \mathcal{R}^-_1(s,w) \big)
  + \!\sum_{j = 0}^{\lfloor \frac{1}{2} - \Re s \rfloor} \!\big( \mathcal{R}^+_{-j}(s,w) - \mathcal{R}^-_{-j}(s,w) \big).\notag
\end{align}
The terms $\mathcal{R}^+_1$ and $\mathcal{R}^-_1$ denote residual terms coming from apparent poles in the zeta functions in the continuous component.  These are given by
\begin{align*}
  \mathcal{R}_{1}^{\pm}(s,w):&= 2\pi  \!\Res_{\pm z=\frac{3}{2}-s-w}\! \frac{G(s,z) \pi^{\frac{1}{2}+z}}{\Gamma(\frac{1}{2}+z)} \sum_\mathfrak{a} \zeta_\mathfrak{a}(s+w,z)
  \left\langle \mathcal{V},E_\mathfrak{a}(\cdot,\tfrac{1}{2}-\overline{z})\right\rangle
\end{align*}
and, as described in~\eqref{eq:continuous_mero_with_resids}, only appear when $\Re(s + w) < \tfrac{3}{2}$.
The terms $\mathcal{R}^+_{-j}$ and $\mathcal{R}^-_{-j}$ denote residual terms coming from apparent poles from the gamma functions in the continuous component and are given by
\begin{align*}
  \mathcal{R}^\pm_{-j}(s,w) &= 2\pi  \sum_\mathfrak{a} \Res_{\pm z=\frac{1}{2}-j-s} \frac{G(s,z) \pi^{\frac{1}{2}+z}}{\Gamma(\frac{1}{2}+z)}\zeta_\mathfrak{a}(s+w,z) \left\langle \mathcal{V}, E_\mathfrak{a}(\cdot, \tfrac{1}{2}-\overline{z})\right\rangle.
\end{align*}
In the cases when $\Re(s+w) = \frac{3}{2}$ and $\Re(s)=\frac{1}{2}-j$,~\eqref{eq:continuous_mero_with_resids} is slightly altered, mainly that the line of integration for the integral term is bent slightly to the right into the zero-free region of $\zeta(1-2z)$, and we only have the corresponding $R^-$ residue for that line.

Note that for any fixed $s$ and $w$, only finitely many residual terms
$\mathcal{R}^\pm_{-j}$ appear in the meromorphic
continuation~\eqref{eq:continuous_mero_with_resids}.
As each residual term has meromorphic continuation to $\mathbb{C}^2$ (coming from the
meromorphic continuations of the zeta function, gamma function, and Eisenstein series), we
conclude that the continuous part of the spectrum admits meromorphic continuation to
$\mathbb{C}^2$.

\section{Analytic behavior of $W_2(s)$}\label{sec:W2}

Recall from Proposition~\ref{prop:DsSS_decomposition} that $W_2(s)$ is defined by
\begin{equation*}
  W_2(s)
  =
  \frac{16\zeta(s+2)^2L(s+2,\chi)^2}{(1+2^{-s-2})\zeta(2s+4)}
  +
  2 Z_2(s+2,0).
\end{equation*}
In this section, we will study the meromorphic properties of $W_2(s)$.
As described in section~\S\ref{ssec:D2_Z2_mero}, the discrete spectrum of $Z_2(s,0)$ has
infinitely many poles on the line $\Re s = - \frac{1}{2}$. We will also see that there are
poles in the continuous spectrum coming from zeros of the zeta function potentially near
this line.
Thus we focus our analysis of $W_2(s)$ to the half-plane $\Re s  > -\frac{5}{2}$.

Our analysis follows the decomposition of $W_2(s)$ into diagonal, non-spectral, discrete, and continuous parts.
When these observations are combined, we conclude the following theorem.

\begin{theorem}\label{thm:w2_properties}
  The function $W_2(s)$ is meromorphic in $\mathbb{C}$ and analytic in the right half-plane $\Re s > 0$.
  The rightmost pole of $W_2(s)$ occurs at $s=0$, with residue $2\pi^2$, coming from the non-spectral part.
  The function $W_2(s)$ is otherwise analytic in $\Re s > -\frac{5}{2}$, with the exception of a pole at $s=-\frac{3}{2}$ with residue
  \begin{equation*}
    \Res_{s=-\frac{3}{2}}
    W_2(s)
    =
    \frac{8(4-\sqrt{2})
    \zeta(\frac{3}{2})^2 L(\frac{3}{2},\chi)^2}{7 \pi^2 \zeta(3)}
    \approx 1.27046\; 77438,
  \end{equation*}
  coming from the continuous part of the spectrum.
\end{theorem}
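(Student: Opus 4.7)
The plan is to split $W_2(s)$ into four pieces corresponding to the decomposition of $2Z_2(s+2,0)$ into non-spectral, discrete, and continuous parts (as in~\eqref{eq:Z_spectral}), together with the diagonal $D(s) := \frac{16\zeta(s+2)^2 L(s+2,\chi)^2}{(1+2^{-s-2})\zeta(2s+4)}$. Each piece is meromorphic on $\mathbb{C}$, and I would analyze its poles and residues in the strip $\Re s > -\tfrac{5}{2}$ separately before recombining.

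Two of the four pieces dispatch immediately. By Lemma~\ref{lem:Z2_mero_summary} the discrete part is analytic in the half-plane of interest (it is analytic for $\Re s > -\tfrac{1}{2}$ in the pre-shift variable). The diagonal $D(s)$ is meromorphic with a double pole at $s=-1$ from $\zeta(s+2)^2$, and the trivial zero of $\zeta(2s+4)$ in the denominator forces $D(s)$ to \emph{vanish} at $s=-\tfrac{3}{2}$. The non-spectral part is given explicitly by~\eqref{eq:nonspectral_sw_def}, namely
\begin{equation*}
  \frac{16\,\zeta(s+1)\zeta(s+2)}{s+1}\bigl(1 - 2^{-1-s} + 4^{-1-s}\bigr),
\end{equation*}
which has a simple pole at $s=0$ and a double pole at $s=-1$. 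At $s=0$, using $\zeta(2)=\tfrac{\pi^2}{6}$ and $(1-\tfrac{1}{2}+\tfrac{1}{4})=\tfrac{3}{4}$, the residue equals $12\zeta(2)=2\pi^2$; since both $D(s)$ and the continuous piece are regular at $s=0$, this is the full residue at the rightmost pole of $W_2$.

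The essential step is then showing the cancellation of the apparent double pole at $s=-1$. Using $\zeta(s+2)=(s+1)^{-1}+\gamma+O(s+1)$, $L(1,\chi)=\tfrac{\pi}{4}$, $\zeta(2)=\tfrac{\pi^2}{6}$, and $(1-2^{-1-s}+4^{-1-s})|_{s=-1}=1$, I compute that $D(s)$ contributes $+4(s+1)^{-2}$ at leading order while the non-spectral piece contributes $-8(s+1)^{-2}$, leaving a surviving $-4(s+1)^{-2}$ that must be killed by the continuous part. From~\eqref{eq:continuous_mero_with_resids}, the residual terms $\mathcal{R}_1^{\pm}(s+2,0)$ are active in $\Re s<-\tfrac{1}{2}$, and at $s=-1$ they are residues of $G(s+2,z)\zeta_\mathfrak{a}(s+2,z)\langle \mathcal{V},E_\mathfrak{a}(\cdot,\tfrac{1}{2}-\overline{z})\rangle$ at $\pm z=-\tfrac{1}{2}-s=\tfrac{1}{2}$, where the factor $\zeta(s+\tfrac{3}{2}-z)$ inside $\zeta_\mathfrak{a}(s+2,z)$ develops its own pole: this collision is precisely where the ``double pole in associated $L$-functions'' flagged in the introduction arises. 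Summing over the three cusps $\mathfrak{a}\in\{0,\tfrac{1}{2},\infty\}$ using the explicit $\zeta_\mathfrak{a}(s,z)$ should produce a double-pole contribution $+4(s+1)^{-2}$, and a careful expansion to the next order should yield the simple-pole cancellation as well, so that $W_2(s)$ is analytic at $s=-1$.

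Finally, at $s=-\tfrac{3}{2}$ the diagonal $D(s)$ vanishes, the non-spectral piece is analytic (all of $\zeta(-\tfrac{1}{2})$, $\zeta(\tfrac{1}{2})$, and $1/(s+1)$ are finite), and the discrete part is analytic, so the entire residue comes from the continuous piece. The relevant residual terms $\mathcal{R}_1^{\pm}(s+2,0)$ collapse to residues of $G(-\tfrac{3}{2},z)\zeta_\mathfrak{a}(\tfrac{1}{2},z)\langle\mathcal{V},E_\mathfrak{a}(\cdot,\tfrac{1}{2}-\overline{z})\rangle$ at $\pm z=1$. Using $\zeta^{(2)}(3)=\tfrac{7}{8}\zeta(3)$, the explicit cusp Dirichlet series, and the Eisenstein inner products extracted from the same Rankin--Selberg-type unfolding that produced~\eqref{eq:inner_product_factorization}, I would combine the three cusp contributions; the factor $(4-\sqrt{2})$ should emerge from the explicit sums over $\mathfrak{a}$, producing the claimed residue $\tfrac{8(4-\sqrt{2})\zeta(\tfrac{3}{2})^2 L(\tfrac{3}{2},\chi)^2}{7\pi^2\zeta(3)}$. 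The main obstacle throughout is the precise tracking of Laurent coefficients from the continuous spectrum: this drives both the two-layer cancellation at $s=-1$ and the clean evaluation of the residue at $s=-\tfrac{3}{2}$.
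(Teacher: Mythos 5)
Your decomposition and overall strategy match the paper's: diagonal, non-spectral, discrete, and continuous pieces, with the pole at $s=0$ read off the non-spectral part, a three-way cancellation of the double pole at $s=-1$ (the $+4$, $-8$, $+4$ leading coefficients you compute agree with the paper's principal parts), and the residue at $s=-\tfrac{3}{2}$ extracted from the residual terms $\mathcal{R}_1^{\pm}$. However, there is a genuine gap in your accounting of the strip $-\tfrac{5}{2} < \Re s < -\tfrac{3}{2}$. The diagonal part $D(s)$ is \emph{not} pole-free there: its denominator $(1+2^{-s-2})\zeta(2s+4)$ vanishes at infinitely many points with $\Re s = -2$ (solutions of $2^{-s-2}=-1$) and at $s=(\rho-4)/2$ for every nontrivial zero $\rho$ of $\zeta$ with $\tfrac{1}{2}<\Re\rho<1$, all of which lie in the region where the theorem asserts analyticity. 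These poles are cancelled in the paper by the \emph{second} pair of residual terms from the continuous spectrum, $\mathcal{R}_0^{\pm}(s+2,0)$, which become active precisely for $\Re s < -\tfrac{3}{2}$ and satisfy $4\mathcal{R}_0^{+}(s+2,0) = -\tfrac{16\zeta(s+2)^2L(s+2,\chi)^2}{(1+2^{-s-2})\zeta(2s+4)}$, i.e.\ they annihilate the diagonal identically there (Remark~\ref{rem:annihilation_remark}). Your proposal never mentions $\mathcal{R}_0^{\pm}$, so as written it cannot establish analyticity in $-\tfrac{5}{2}<\Re s<-\tfrac{3}{2}$. Relatedly, your stated reason that $D(-\tfrac{3}{2})=0$ is a ``trivial zero of $\zeta(2s+4)$'' is backwards --- a zero in the denominator would create a pole; the vanishing comes from the \emph{pole} of $\zeta$ at $2s+4=1$.

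The remaining distance between your sketch and a proof is that the two decisive computations are left as ``should'' statements. The paper closes them by (i) invoking the antisymmetry $\mathcal{R}_1^{+}(s,0)=-\mathcal{R}_1^{-}(s,0)$, so only one residual term need be evaluated, and (ii) using the Gupta--Zagier regularized Rankin--Selberg unfolding to get the closed form $\langle \mathcal{V}, E_0(\cdot,\overline{s})\rangle = \tfrac{16\Gamma(s)\zeta(s)^2L(s,\chi)^2}{(4\pi)^s(1+2^{-s})\zeta(2s)}$, which turns $4\mathcal{R}_1^{+}(s+2,0)$ into an explicit ratio of classical functions; the full Laurent data at $s=-1$ (including the first-order terms, which you would also need to check cancel) and the residue at $s=-\tfrac{3}{2}$, with its factor $(4-\sqrt{2})/7$ arising from $\sqrt{2}/(2\sqrt{2}+1)$ and $\zeta^{(2)}(3)=\tfrac{7}{8}\zeta(3)$, then follow by direct expansion rather than by a fresh sum over the three cusps.
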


\begin{remark}\label{remark:ref}
  It is useful to compare this Theorem to what can be gotten through classical
  estimates.
  Writing $W_2$ as
  \begin{align*}
    W_2(s) &= \sum_{n \geq 1} \frac{r_2(n)^2}{n^{s+2}} + 2\sum_{n, h \geq 1} \frac{r_2(n)r_2(n-h)}{n^{s+2}} \\
    &= \sum_{n \geq 1} \frac{r_2(n)}{n^{s+2}}\big(2r_2(0) + 2r_2(1) + \cdots + 2r_2(n-1) + r_2(n)\big)
  \end{align*}
  and using the classical Sierpi\'{n}ski estimate $\sum_{n \leq R} r_2(n) = 2
  \pi R + O(R^{1/3})$, we have that
  \begin{equation*}
    W_2(s) = \sum_{n \geq 1} \frac{2 \pi r_2(n)}{n^{s+1}} + H(s) = 8 \pi \zeta(s+1) L(s+1, \chi) + H(s),
  \end{equation*}
  where $H(s)$ is analytic for $\Re s > - \frac{2}{3}$.
  Note that we have rewritten the Dirichlet series
  using~\eqref{eq:Ls_theta_squared_identity}.

  From this classical point of view, the leading pole with residue $2 \pi^2$
  and analytic continuation to $\Re s > - \frac{2}{3}$ are both clear.
  Thus the new content of Theorem~\ref{thm:w2_properties} is the meromorphic continuation and the analysis of the pole at $s = -\frac{3}{2}$.
  In fact, the pole at $s = -\frac{3}{2}$ ultimately leads to the leading pole of $D(s, P_2 \times P_2)$.
\end{remark}

\subsection{The Diagonal Part}

We first consider the first term in $W_2(s)$,
\begin{equation}\label{eq:diagonal_base}
 \frac{16\zeta(s+2)^2L(s+2,\chi)^2}{(1+2^{-s-2})\zeta(2s+4)},
\end{equation}
which we call the ``diagonal part.''
Using well-known properties of $\zeta(s)$ and $L(s,\chi)$, we see that the diagonal part is analytic in the right half-plane $\Re s > -1$.
There is a double pole at $s = -1$ coming from $\zeta(s+2)^2$ with principal part
\[
  \frac{4}{(s+1)^2}
  +
  \frac{8\gamma + \frac{4}{3}\log 2 +\frac{32}{\pi}L'(1,\chi)-\frac{48}{\pi^2}\zeta'(2)}
  {s+1},
\]
in which we have used the evaluation $L(1,\chi)=\pi/4$ to simplify.

The diagonal part has infinitely many simple poles on the line $\Re s = -2$, coming from $1+2^{-s-2}=0$ as well as infinitely many poles at the zeros of $\zeta(2s+4)$.
Note that $\zeta(2s+4)^{-1}$ is analytic for $\Re s > -\frac{3}{2}$.

\begin{remark}\label{rem:annihilation_remark}
  As in~\cite{HulseKuanLowryDudaWalker17, HulseGaussSphere}, the diagonal part perfectly cancels with a pair of residual terms from the continuous spectrum once $\Re s < - \frac{3}{2}$.
  Thus the poles coming from zeros of $(1+2^{-s-2})\zeta(2s+4)$ will not affect our analysis of $W_2(s)$.
\end{remark}

\subsection{The Non-Spectral Part}

As shown in~\eqref{eq:nonspectral_sw_def}, the non-spectral part of $W_2(s)$ is given by
\begin{equation*}
  \mathfrak{E}_2(s) := \frac{16\zeta(s+1)\zeta(s+2)}{s+1}(1-2^{-s-1}+4^{-s-1}).
\end{equation*}
The meromorphic behavior of $\mathfrak{E}_2(s)$ is determined by the behavior of $\zeta(s)$.
This term has a simple pole at $s=0$ with residue $2\pi^2$, a double pole at $s=-1$ with principal part
\begin{equation*}
  -\frac{8}{(s+1)^2} - \frac{8(\gamma+\log \pi)}{s+1},
\end{equation*}
and is otherwise analytic.

\subsection{The Discrete Spectral Part}\label{sec:discrete_spectral_W}

The discrete part of $W_2(s)$ is analytic for $\Re s > -\frac{5}{2}$, where we focus our analysis.
On the line $\Re s = -\frac{5}{2}$, the discrete part has a line of poles at $s
= -\frac{5}{2} \pm it_j$, where $\frac{1}{4} +t_j^2$ denotes a discrete
eigenvalue of the Laplace-Beltrami operator on $\Gamma_0(4) \backslash
\mathcal{H}$.
Further, on each line $\Re s = -\frac{5}{2} - n$ where $n \in \mathbb{N}_{\geq
1}$, the discrete part has another line of poles.

\begin{remark}\label{rmk:discrete}

We note that the inner product $\langle \mathcal{V}, \mu_j \rangle$ factors as
\begin{equation}\label{eq:inner_product_factorization}
  \frac{2}{L(1, \chi)}
  \rho_j(1)
  \widetilde{L}(\tfrac{1}{2}, \overline{\mu_j})
  \widetilde{L}(\tfrac{1}{2}, \overline{\mu_j} \times \chi)
  \Gamma(\tfrac{1}{2} + it_j)
  \Gamma(\tfrac{1}{2} - it_j)
\end{equation}
in dimension $k=2$, where $\chi$ is the nontrivial character mod $4$, as before, and
\begin{equation*}
  \widetilde{L}(s,\overline{\mu_j} \times \chi)
  =
  \sum_{n \geq 1}
  \frac{\chi(n) \overline{\lambda_j(n)}}
       {n^s}.
\end{equation*}
This identity follows from the observation that $y^{\frac{1}{2}}\theta^2(z) =
E^1_\infty(z, \frac{1}{2})$, where $E^1_\infty(z, s)$ is the weight-one Eisenstein series
for the cusp at infinity, and thus the inner product becomes a special value of a
Rankin-Selberg convolution,
\begin{equation*}
  \frac{L(s,\theta^2 \times \overline{\mu_j} )}{L(2s,\chi)}
  =
  \sum_{n\geq 1}
  \frac{r_2(n)\overline{\rho_j(n)}}{n^s},
\end{equation*}
which we then factor into lower-degree $L$-functions by comparing Euler factors. A similar
construction can be obtained in the $k=4$ case.

Along with classical estimates for $\rho_j(1)$ and gamma functions, this explicit
description of $\langle \mathcal{V}, \mu_j \rangle$ is enough to permit analysis beyond
the line of poles of the discrete spectrum.

\end{remark}

\subsection{The Continuous Spectral Part}\label{sec:continuous_spectral_W}

As shown in section~\S\ref{ssec:D2_Z2_mero}, infinitely many residual terms $\mathcal{R}^\pm_{-j}(s,w)$ appear in the meromorphic continuation of the continuous part of $Z_2(s,w)$.  However, the only residual terms that appear in the half-plane $\Re s > - \frac{5}{2}$ are $\mathcal{R}^{\pm}_{1}$ and $\mathcal{R}^{\pm}_{0}$.

In~\cite[Lemma~4.3]{HulseGaussSphere}, it is shown that
\begin{equation}\label{eq:residuals_antisymmetric}
  \mathcal{R}^{+}_{1}(s,0) = - \mathcal{R}^{-}_{1}(s,0).
\end{equation}
The proof applies in the case $k = 2$ as well, and~\eqref{eq:residuals_antisymmetric} shows that the total contribution of $\mathcal{R}^{+}_{1} - \mathcal{R}^{-}_{1}$ within $2Z_2(s+2, 0)$ is given by
\begin{equation*}
  4 \mathcal{R}^{+}_{1}(s+2, 0) = \frac{4 \Gamma(s+\frac{3}{2}) \pi^{s+\frac{3}{2}} \langle \mathcal{V}, E_0(\cdot ,- \overline{s}) \rangle}{\Gamma(s+2)^2}.
\end{equation*}

We relate the inner product $\langle \mathcal{V}, E_0(\cdot, -\overline{s}) \rangle$ to the diagonal part through Gupta's generalization~\cite{Gupta00} of Zagier's regularized Rankin--Selberg construction~\cite{ZagierRankinSelberg}. 
As noted in~\cite[\S4.1]{HulseGaussSphere}, following the
regularization technique of unfolding and analytic continuation of Gupta and Zagier leads to the equality
\begin{equation*}
  \langle \mathcal{V}, E_0(\cdot, \overline{s}) \rangle
  =
  \frac{\Gamma(s)}{(4\pi)^s } \sum_{m \geq 1} \frac{r_2(m)^2}{m^s}
  =
  \frac{16\Gamma(s)\zeta(s)^2 L(s,\chi)^2}{(4\pi)^s (1+2^{-s})\zeta(2s)},
\end{equation*}
valid initially for $0 < \Re s < 1$ and extended through analytic continuation.
Note that we have used the identity~\eqref{diagonal_closed_form}
for the second equality.
It follows that
\begin{equation*}
  4 \mathcal{R}_1^+(s+2,0)
  = \frac{4^{s+3}\pi^{2s+\frac{3}{2}}\Gamma(s+\frac{3}{2}) \Gamma(-s)\zeta(-s)^2 L(-s,\chi)^2}{\Gamma(s+2)^2(1+2^s)\zeta(-2s)}.
\end{equation*}

Recall from section~\S\ref{ssec:D2_Z2_mero} that the residual terms $\mathcal{R}^{\pm}_{1}(s+2, 0)$ only contribute when $\Re s < -\frac{1}{2}$.
It therefore suffices to study $\mathcal{R}_1^\pm(s+2,0)$ in the strip $\Re s  \in (-\frac{5}{2},-\frac{1}{2})$.
In this region, the only poles come from $\Gamma(s+\frac{3}{2})$ and $\zeta(-s)$.
There is a double pole at $s=-1$ with principal part
\begin{equation*}
  \frac{4}{(s+1)^2}  + \frac{24 \log \pi -4\log 2 + 144\zeta'(2) /\pi^2}{3(s+1)} -\frac{32 L'(1,\chi)}{\pi(s+1)},
\end{equation*}
as well as a simple pole at $s = -\frac{3}{2}$, coming from $\Gamma(s + \frac{3}{2})$, with residue
\begin{equation*}
  \frac{8(4-\sqrt{2}) \zeta(\frac{3}{2})^2 L(\frac{3}{2},\chi)^2}{7 \pi^2 \zeta(3)}.
\end{equation*}

The next pair of residual terms also satisfy $\mathcal{R}_{0}^{+}(s,0) = - \mathcal{R}_{0}^{-}(s,0)$.
We compute that the total contribution of $\mathcal{R}^{+}_{0} - \mathcal{R}^{-}_{0}$ within
$2 Z_2(s+2, 0)$ is given by
\begin{equation*}
  4\mathcal{R}_0^+(s+2,0)
  =
  - \frac{(4\pi)^{s+2}}{\Gamma(s+2)}
  \left\langle \mathcal{V}, E_\infty(\cdot,\overline{s}+2)\right\rangle
  =
  - \frac{16\zeta^2(s+2)L(s+2,\chi)^2}{(1+2^{-s-2})\zeta(2s+4)}.
\end{equation*}
Thus the contribution from $\mathcal{R}_0^{\pm}$ \emph{exactly cancels} with the diagonal
part~\eqref{eq:diagonal_base} in the left half-plane $\Re s < -\frac{3}{2}$, as stated in
Remark~\ref{rem:annihilation_remark}.

\subsection*{Understanding the continuous part to the left of $\Re s = -\frac{5}{2}$}

To study the meromorphic properties of $W_2(s)$ beyond the line
$\Re s = - \frac{5}{2}$, it is necessary to study the behavior of
$\mathcal{R}_{1}^\pm(s+2, 0)$ further and to study the additional residual terms
$\mathcal{R}_{-1}^\pm(s+2, 0)$.

There is a pole at $s = -5/2$ coming from $\Gamma(s + \frac{3}{2})$ in
$\mathcal{R}_{1}^\pm(s+2, 0)$ with a residue easily understood in terms of
special values of $L$-functions. The remaining poles of $\mathcal{R}_1^\pm$ all
come from this gamma function.

But the residual terms $\mathcal{R}_{-1}^\pm(s+2, 0)$ are much more troublesome.
These residual terms appear in the meromorphic continuation when $\Re s < -\tfrac{5}{2}$.
In contrast to the previous residual terms, it is not true that
$\mathcal{R}_{-1}^+(s, 0) = - \mathcal{R}_{-1}^-(s, 0)$.
Further, these terms do not seem to cancel with other parts of $W_2(s)$.

The term $\mathcal{R}_{-1}^+(s+2, 0)$ is given by
\begin{equation*}
  -2\pi \sum_{\mathfrak{a}}
  \frac{\Gamma(2s + 4)}{\Gamma(s+2)^2\Gamma(-2-s)}
  \pi^{-s}
  \zeta_{\mathfrak{a}}(s+2, -\tfrac{5}{2} - s)
  \langle
    \mathcal{V},
    E_\mathfrak{a}(\cdot, 3 + \overline{s})
  \rangle,
\end{equation*}
and the corresponding term $\mathcal{R}_{-1}^-(s+2, 0)$ is similar.
The Eisenstein series $E_\mathfrak{a}(\cdot, s)$ can have poles
at nontrivial zeroes of the zeta function $\zeta(2s)$. Thus
$\mathcal{R}_{-1}^+(s+2, 0)$ has potential poles at $s = -3 + \tfrac{\rho}{2}$,
where $\rho$ is a zero of $\zeta(s)$.

Thus there are potential poles just to the left of the line $\Re s = - 5/2$,
coming from zeros of the zeta function. If we assume the Riemann hypothesis,
so that $\Re \rho = 1/2$, then there are no potential poles until
$\Re s = -2.75$.

\section{Analysis of $D(s, P_2 \times P_2)$}\label{sec:DsP2P2}

In this section we begin our study of $D(s,P_2 \times P_2)$, with an emphasis on the behavior of its leading poles.
By analogy with $D(s, P_k \times P_k)$ in dimensions $k \geq 3$, one should expect a large amount of cancellation in the rightmost poles and residues of the components of $D(s, P_2 \times P_2)$.

Combining Proposition~\ref{prop:DsPP_to_DsSS}, which relates $D(s, P_2 \times P_2)$ and $D(s, S_2 \times S_2)$, with Proposition~\ref{prop:DsSS_decomposition}, which relates $D(s, S_2 \times S_2)$ to $\zeta(s)$ and $W_2(s)$, yields the following unified expression for $D(s, P_2 \times P_2)$:
\begin{align}
  D(s, P_2 \times P_2)
  &=
  \zeta(s)+ W_2(s-2)+\pi^2 \zeta(s-2) \label{line1:p2_decomposition}
  \\
  &\quad -  2\pi\zeta(s-1) -2\pi L(s-1,\theta^2) \label{line2:p2_decomposition}
  \\
  &\quad + \frac{1}{2\pi i} \int_{(\sigma)} W_2(s-2-z) \zeta(z) \frac{\Gamma(z)\Gamma(s-z)}{\Gamma(s)}\, dz \label{line3:p2_decomposition}
  \\
  &\quad + i \int_{(\sigma)} L(s-1-z,\theta^2)\zeta(z) \frac{\Gamma(z)\Gamma(s-1-z)}{\Gamma(s-1)}\, dz, \label{line4:p2_decomposition}
\end{align}
initially valid for $\Re s \gg 1$ and $\sigma \in (1,\Re s)$.
We restrict our analysis of $D(s, P_2 \times P_2)$ to the half-plane $\Re s > \frac{1}{2}$ so as to avoid a line of poles appearing in the discrete part of the spectrum of~\eqref{line3:p2_decomposition}.
For each line~\eqref{line1:p2_decomposition}--\eqref{line4:p2_decomposition}, we study the locations and residues of poles for $\Re s > \frac{1}{2}$.
This information is collected in Table~\ref{table} for easy reference.

\begin{table}[t]
\caption{Summary of Polar Data in the Half-Plane $\Re s > \frac{1}{2}$}\label{table}
\small
\renewcommand{\arraystretch}{1.5}
\begin{threeparttable}
\begin{tabular}{llll}
  \toprule

  \textsc{pole location} & \textsc{line} & \textsc{contributing term} & \textsc{residue} \\

  \midrule

	$s=3$	&\eqref{line1:p2_decomposition}	& $\pi^2\zeta(s-2)$ & $\pi^2$ \\
	$s=3$	&\eqref{line3:p2_decomposition}	& $\frac{\mathfrak{E}_2(s-3)}{s-1}$, from $\frac{W_2(s-3)}{s-1}$ & $\pi^2$ \\
	$s=3$	&\eqref{line4:p2_decomposition}	& $-2\pi \frac{L(s-2,\theta^2)}{s-2}$	& $-2\pi^2$ \\

  \midrule

	$s=2$	&\eqref{line1:p2_decomposition} & $\mathfrak{E}_2(s-2)$, from $W_2(s-2)$ & $2\pi^2$ \\
	$s=2$	&\eqref{line2:p2_decomposition}	& $-2\pi \zeta(s-1)$		& $-2\pi$  \\
	$s=2$	&\eqref{line2:p2_decomposition}	& $-2\pi L(s-1,\theta^2)$	& $-2\pi^2$ \\
	$s=2$	&\eqref{line3:p2_decomposition}	& $-\frac{\mathfrak{E}_2(s-2)}{2}$, from $-\frac{W_2(s-2)}{2}$ & $-\pi^2$ \\
  $s=2$	&\eqref{line4:p2_decomposition}	& $\pi L(s-1,\theta^2)$ & $\pi^2$ \\
  $s=2$	&\eqref{line4:p2_decomposition}	& $-2\pi \frac{L(s-2,\theta^2)}{s-2}$	& $2\pi$ \\

  \midrule

	$s= \frac{3}{2}$	&\eqref{line3:p2_decomposition}	& $\frac{4\mathcal{R}_1^+(s-1,0)}{s-1}$, from $\frac{W_2(s-3)}{s-1}$ &  $\frac{16(4-\sqrt{2}) \zeta(\frac{3}{2})^2 L(\frac{3}{2},\chi)^2}{7 \pi^2 \zeta(3)}$ \\
	\midrule

	$s=1$	&\eqref{line1:p2_decomposition} & $\zeta(s)$		&	$1$ \\
	$s=1$	&\eqref{line3:p2_decomposition}	& $\frac{W_2(s-3)}{s-1}$ &	$W_2(-2)$ \\
	$s=1$	&\eqref{line3:p2_decomposition}	& $\frac{s\mathfrak{E}_2(s-1)}{12}$ & $\frac{\pi^2}{6}$ \\

  \bottomrule
\end{tabular}
\end{threeparttable}
\end{table}

\subsection*{Poles from terms in~\eqref{line1:p2_decomposition} and~\eqref{line2:p2_decomposition}}

These two lines contain simple $L$-functions and $W_2(s-2)$, so our polar data is either classically known or given by Theorem~\ref{thm:w2_properties}.

\subsection*{Poles from terms in~\eqref{line3:p2_decomposition}}

To understand the meromorphic properties of the integral, shift the line of integration $(\sigma)$ left to $(-3+\epsilon)$ for some small $\epsilon > 0$.
There are poles at $z = 1$ from $\zeta(z)$ as well as poles at $z = 0$ and $z = -1$ from $\Gamma(z)$.
By Cauchy's residue theorem, line~\eqref{line3:p2_decomposition} can be written as
\begin{equation*}
\begin{split}
  \frac{1}{2\pi i} \int_{(-3 + \epsilon)} &W_2(s - 2 - z) \zeta(z) \frac{\Gamma(z) \Gamma(s  - z)}{\Gamma(s)} \, dz
  \\
  + &\frac{W_2(s - 3)}{s -1} - \frac{W_2(s - 2)}{2} + \frac{sW_2(s - 1)}{12}.
\end{split}
\end{equation*}
The shifted integral is analytic in the right half-plane $\Re s > -1+\epsilon$, and the poles of the extracted residue terms can be understood from the poles of $W_2(s)$ as described in Theorem~\ref{thm:w2_properties}.

\subsection*{Poles from terms in~\eqref{line4:p2_decomposition}}

As above, shift the line of integration $(\sigma)$ to $(-3 + \epsilon)$ to show that the integral in~\eqref{line4:p2_decomposition} is given by
\begin{equation*}
\begin{split}
  i \int_{(-3 + \epsilon)} &L(s - 1 - z, \theta^2) \zeta(z) \frac{\Gamma(z) \Gamma(s -1 - z)}{\Gamma(s -1)} \, dz
  \\
  -2\pi \bigg( &\frac{L(s - 2, \theta^2)}{s - 2} - \frac{L(s - 1, \theta^2)}{2} + \frac{L(s, \theta^2)(s -1)}{12}\bigg).
\end{split}
\end{equation*}
The shifted integral is analytic for $\Re s> -1+\epsilon$, and the poles of the $z$-residues can be understood using the identity $L(s, \theta^2) = 4 \zeta(s) L(s, \chi)$ noted in~\eqref{eq:Ls_theta_squared_identity}.

\subsection{Examination of Poles and Their Cancellation}

With reference to Table~\ref{table}, we see that the residues of the poles at $s = 3$ cancel, so that $D(s, P_2 \times P_2)$ is analytic for $\Re s > 2$.
To examine the potential pole at $s=2$, we compute
\begin{equation}\label{eq:L0theta_eval}
  -2\pi L(0,\theta^2) = -8\pi \zeta(0)L(0,\chi)  = 2\pi,
\end{equation}
in which we've used that $\zeta(0)=-1/2$ and that $L(0,\chi)=1/2$.
Referring to Table~\ref{table}, we see that the residues of the poles at $s = 2$ cancel as well.

The pole at $s=\frac{3}{2}$ clearly does not cancel, and represents the leading pole of $D(s,P_2 \times P_2)$.

To understand the residue at $s=1$, we must compute $W_2(-2)$.
This calculation is simplified by the observation that $\mathcal{R}^\pm_0$
perfectly cancels with the diagonal part in this region, so both can be ignored.
The contribution from the non-spectral term is $\mathfrak{E}_2(-2) = -2$.
The contribution from $\mathcal{R}_1^\pm$ is $4\mathcal{R}_1^+(0,0)$, which
vanishes since $\mathcal{R}_1^\pm$ has $\Gamma(s+2)^2$ in its denominator.
Similarly, the discrete and continuous spectral terms vanish because they have
factors of $G(0,it_j)=0$ and $G(0,z)=0$ in them, respectively.
Thus $W_2(2) = -2$, and it follows that the pole at $D(s, P_2 \times P_2)$ at $s
= 1$ has residue $\frac{\pi^2}{6}-1$.

On the line $\Re s = 1/2$, the term $W_2(s-3)/(s-1)$ has a line of poles coming
from the discrete spectrum. In addition, there are potential poles from the
continuous spectrum at $s = 1/2$ and $s = \rho/2$ for each nontrivial zero
$\rho$ of $\zeta(s)$.

From these observations we derive the following theorem.

\begin{theorem}\label{thm:DsP2xP2_properties}
The Dirichlet series $D(s, P_2 \times P_2)$, originally defined in the right half-plane $\Re s > 3$ by the series
\begin{equation*}
  \sum_{m =1}^\infty \frac{P_2(m)^2}{m^{s}},
\end{equation*}
has meromorphic continuation to $\mathbb{C}$ given by~\eqref{line1:p2_decomposition}--\eqref{line4:p2_decomposition}.
It is analytic in the right half-plane $\Re s > \frac{3}{2}$ and has a pole at $s=\frac{3}{2}$ with residue
\begin{equation*}
  C_\frac{3}{2}:=\frac{16(4-\sqrt{2}) \zeta(\frac{3}{2})^2 L(\frac{3}{2},\chi)^2}{7 \pi^2 \zeta(3)}.
\end{equation*}
The function $D(s,P_2 \times P_2)$ has a second simple pole at $s=1$ with residue $\frac{\pi^2}{6}-1$ and is otherwise analytic in the right half-plane $\Re s > \frac{1}{2}$.
\end{theorem}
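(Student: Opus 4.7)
The plan is to work from the unified decomposition \eqref{line1:p2_decomposition}--\eqref{line4:p2_decomposition} of $D(s, P_2 \times P_2)$ and to analyze each line for poles in the half-plane $\Re s > \tfrac{1}{2}$. For the meromorphic continuation of the Mellin--Barnes integrals in \eqref{line3:p2_decomposition} and \eqref{line4:p2_decomposition}, I would shift the contour from $(\sigma)$ leftward to $(-3 + \epsilon)$. This crosses a pole of $\zeta(z)$ at $z = 1$ and poles of $\Gamma(z)$ at $z = 0$ and $z = -1$, and Cauchy's theorem replaces each integral by three residue terms plus a shifted integral that is analytic for $\Re s > -1 + \epsilon$. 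The extracted terms from \eqref{line3:p2_decomposition} are the explicit combination $\tfrac{W_2(s-3)}{s-1} - \tfrac{1}{2} W_2(s-2) + \tfrac{s}{12} W_2(s-1)$, whose pole structure is controlled by Theorem~\ref{thm:w2_properties}; the analogous terms from \eqref{line4:p2_decomposition} involve shifts of $L(\cdot, \theta^2)$, which factor via \eqref{eq:Ls_theta_squared_identity}. Combined with lines \eqref{line1:p2_decomposition}--\eqref{line2:p2_decomposition}, this yields meromorphic continuation of $D(s, P_2 \times P_2)$ to $\mathbb{C}$.

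Next, I would enumerate all candidate poles in $\Re s > \tfrac{1}{2}$; these live at $s \in \{3, 2, \tfrac{3}{2}, 1\}$ and are collected in Table~\ref{table}. At $s = 3$, the three residues sum to $\pi^2 + \pi^2 - 2\pi^2 = 0$. At $s = 2$, six contributions appear; they combine to zero after using the classical evaluations $\zeta(0) = -\tfrac{1}{2}$ and $L(0, \chi) = \tfrac{1}{2}$, which imply $L(0, \theta^2) = -1$ (giving the $+2\pi$ contribution from $-2\pi L(s-2, \theta^2)/(s-2)$) together with $\Res_{s = 1} L(s, \theta^2) = 4 L(1, \chi) = \pi$. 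Hence $D(s, P_2 \times P_2)$ is analytic in $\Re s > \tfrac{3}{2}$.

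The only contribution at $s = \tfrac{3}{2}$ comes from $\tfrac{W_2(s-3)}{s-1}$: the pole of $W_2$ at $-\tfrac{3}{2}$ identified in Theorem~\ref{thm:w2_properties} is scaled by $1/(s-1) = 2$ to yield exactly $C_{3/2}$. At $s = 1$, three terms contribute: residue $1$ from $\zeta(s)$ in \eqref{line1:p2_decomposition}; residue $W_2(-2)$ from $\tfrac{W_2(s-3)}{s-1}$; and residue $\tfrac{1 \cdot 2\pi^2}{12} = \tfrac{\pi^2}{6}$ from $\tfrac{s W_2(s-1)}{12}$, since $W_2(s-1)$ inherits the pole at $s = 1$ from $\mathfrak{E}_2$ with residue $2\pi^2$.

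Computing $W_2(-2)$ is the main technical point, and I expect this is where the most care is needed. I would argue term by term using the decomposition of $W_2$ from Section~\ref{sec:W2}: by Remark~\ref{rem:annihilation_remark}, the diagonal piece is annihilated by $\mathcal{R}_0^{\pm}$ in the relevant range, so only the non-spectral term, the pair $\mathcal{R}_1^{\pm}$, and the discrete and continuous spectra remain. At $s = -2$ the non-spectral term $\mathfrak{E}_2(-2)$ evaluates to $-2$; the $\mathcal{R}_1^{\pm}$ contribution vanishes because its explicit form carries $\Gamma(s + 2)^2$ in the denominator, which is infinite at $s = -2$; and both spectral contributions vanish because the collected gamma factor $G(s, z) = \Gamma(s - \tfrac{1}{2} + z)\Gamma(s - \tfrac{1}{2} - z)/\Gamma(s)^2$ contains $\Gamma(0)^{-2} = 0$ at $s = 0$. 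Hence $W_2(-2) = -2$ and the residue at $s = 1$ equals $1 - 2 + \tfrac{\pi^2}{6} = \tfrac{\pi^2}{6} - 1$. All other assertions of the theorem then follow from Theorem~\ref{thm:w2_properties} together with the tabulated cancellations.
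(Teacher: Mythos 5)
Your proposal is correct and follows essentially the same route as the paper: the same contour shift to $(-3+\epsilon)$ extracting the residues at $z=1,0,-1$, the same tabulation and cancellation of residues at $s=3$ and $s=2$ via $\zeta(0)=-\tfrac12$, $L(0,\chi)=\tfrac12$, the same identification of the $s=\tfrac32$ pole from $W_2(s-3)/(s-1)$, and the same term-by-term evaluation $W_2(-2)=-2$ using the annihilation of the diagonal by $\mathcal{R}_0^\pm$, the $\Gamma(s+2)^{-2}$ vanishing of $\mathcal{R}_1^\pm$, and the vanishing of $G(0,\cdot)$. No gaps.
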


\begin{corollary}
  The Dirichlet series $D(s, S_2 \times S_2)$ has meromorphic continuation to the plane, attainable from Theorem~\ref{thm:DsP2xP2_properties} and Proposition~\ref{prop:DsPP_to_DsSS}.
\end{corollary}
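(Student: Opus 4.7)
The plan is to rearrange the identity in Proposition~\ref{prop:DsPP_to_DsSS} to express $D(s, S_2\times S_2)$ in terms of $D(s+2, P_2\times P_2)$ and auxiliary terms of known behavior. Solving that identity for the $D(s-2, S_2\times S_2)$ summand and then substituting $s \mapsto s+2$ yields
\begin{align*}
  D(s, S_2\times S_2)
  &= D(s+2, P_2\times P_2) - \pi^2 \zeta(s) + 2\pi\zeta(s+1) + 2\pi L(s+1, \theta^2) \\
  &\quad - i \int_{(\sigma)} L(s+1-z,\theta^2)\,\zeta(z)\, \frac{\Gamma(z)\Gamma(s+1-z)}{\Gamma(s+1)}\, dz,
\end{align*}
valid initially for $\Re s$ large and $\sigma$ in a suitable strip. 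The first four terms are immediately meromorphic on $\mathbb{C}$: $D(s+2, P_2\times P_2)$ by Theorem~\ref{thm:DsP2xP2_properties}, and the $\zeta$ and $L$-factors by classical considerations together with the factorization $L(s+1,\theta^2) = 4\zeta(s+1)L(s+1,\chi)$ from~\eqref{eq:Ls_theta_squared_identity}.

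The only remaining task is to produce meromorphic continuation of the Mellin--Barnes integral to all of $\mathbb{C}$. Here I would use the standard contour-shifting argument: for any target half-plane $\Re s > \sigma_0$, push the $z$-contour leftward from $(\sigma)$ to $(\sigma')$ with $\sigma' \ll \sigma_0$, crossing only the simple pole of $\zeta(z)$ at $z=1$, the simple poles of $\Gamma(z)$ at $z = 0, -1, -2, \ldots$ down to $\sigma'$, and (depending on $s$) the pole of $L(s+1-z,\theta^2)$ at $z = s$. Each extracted residue is a meromorphic function of $s$ on $\mathbb{C}$, and Stirling applied to the ratio $\Gamma(z)\Gamma(s+1-z)/\Gamma(s+1)$ together with polynomial vertical growth of $\zeta$ and $L$ guarantees that the shifted integral converges absolutely and defines a holomorphic function of $s$ on the chosen half-plane. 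Since $\sigma'$ may be taken arbitrarily negative, this exhibits meromorphic continuation of the integral, and hence of $D(s,S_2\times S_2)$, to all of $\mathbb{C}$.

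The argument is essentially bookkeeping: no new analytic content is required beyond Theorem~\ref{thm:DsP2xP2_properties} and standard Mellin--Barnes manipulation, so I expect no real obstacle. The only mild subtlety is ensuring that when shifting the contour the line $(\sigma')$ stays uniformly bounded away from the poles of $\zeta(z)\Gamma(z)$ and from the $s$-dependent pole at $z=s$, which is handled by restricting $s$ to compact subsets of the target half-plane away from those configurations.
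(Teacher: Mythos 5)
Your proposal is correct and is exactly the argument the paper intends: the corollary is stated without proof beyond the phrase ``attainable from Theorem~\ref{thm:DsP2xP2_properties} and Proposition~\ref{prop:DsPP_to_DsSS},'' and inverting the identity of Proposition~\ref{prop:DsPP_to_DsSS}, shifting $s \mapsto s+2$, and continuing the Mellin--Barnes integral by the same leftward contour shift the paper already performs for line~\eqref{line4:p2_decomposition} is precisely that route. No gaps.
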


\begin{remark}
Much of the analysis of $D(s,P_k \times P_k)$ in~\cite{HulseGaussSphere} carries over to $D(s,P_2 \times P_2)$, which makes it possible to identify key differences in the meromorphic behavior of $D(s,P_k \times P_k)$ between the cases $k=2$ and $k \geq 3$.  Notably, the leading pole at $s=\frac{3}{2}$ in the dimension $2$ case corresponds to a ``traveling pole'' at $s= \frac{5-k}{2}$ in dimension $k$ which contributes to the rightmost pole at $s=1$ in dimension $3$ and is otherwise non-dominant.

Movement of this pole relative to a fixed pole at $s=1$ accounts for the apparent phase change in the generalized Gauss circle problem between dimensions $k=2$ and $k \geq 3$.
\end{remark}

\section{Second Moment Analysis}\label{sec:second_moment}

In this section, we produce estimates for the discrete Laplace transform $\sum_{n \geq 1} P_2(n)^2 e^{-n/X}$ and the continuous Laplace transform $\int_0^\infty P_2(t)^2 e^{-t/X} \, dt$.
We do this by estimating the integral
\begin{equation*}
  \frac{1}{2\pi i} \int_{(4)} D(s, P_2 \times P_2) X^s \Gamma(s) ds = \sum_{n \geq 1} P_2(n)^2 e^{-n/X}
\end{equation*}
using the meromorphic information from Theorem~\ref{thm:DsP2xP2_properties}.

\begin{theorem}\label{thm:smooth_cutoff}
  We have
  \begin{equation*}
    \sum_{n \geq 1} P_2(n)^2 e^{-n/X}
    =
    C_\frac{3}{2} \Gamma(\tfrac{3}{2}) X^\frac{3}{2}
    +
    \Big(\frac{\pi^2}{6}-1\Big) X
    +
    O_\epsilon\big(X^{\frac{1}{2}+\epsilon}\big)
  \end{equation*}
  for any $\epsilon > 0$, in which $C_\frac{3}{2}$ is the constant defined in Theorem~\ref{thm:DsP2xP2_properties}.
\end{theorem}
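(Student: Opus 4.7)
The plan is to realize the smoothed sum as a Mellin--Barnes integral and shift the contour past the identified poles. Starting from the standard Mellin identity $e^{-y} = \frac{1}{2\pi i}\int_{(c)} \Gamma(s) y^{-s}\, ds$ applied termwise (valid since $D(s, P_2 \times P_2)$ converges absolutely on $\Re s = 4$), we have
\begin{equation*}
  \sum_{n \geq 1} P_2(n)^2 e^{-n/X}
  = \frac{1}{2\pi i}\int_{(4)} D(s, P_2 \times P_2)\, \Gamma(s)\, X^s \, ds.
\end{equation*}
The strategy is to shift the line of integration from $\Re s = 4$ down to $\Re s = \tfrac{1}{2} + \epsilon$, picking up the residues of the poles of $D(s, P_2 \times P_2)\Gamma(s) X^s$ identified in Theorem~\ref{thm:DsP2xP2_properties}, namely the leading pole at $s = \tfrac{3}{2}$ contributing $C_{\tfrac{3}{2}} \Gamma(\tfrac{3}{2}) X^{3/2}$ and the secondary pole at $s = 1$ contributing $(\tfrac{\pi^2}{6}-1) X$. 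The apparent poles at $s = 3$ and $s = 2$ cancelled out in the residue bookkeeping of Table~\ref{table} and hence contribute nothing; the next line of poles, coming from the discrete spectrum at $\Re s = \tfrac{1}{2}$, lies strictly to the left of the new contour. Cauchy's theorem then yields the claimed main terms plus an error of the form $\frac{1}{2\pi i}\int_{(\tfrac{1}{2}+\epsilon)} D(s, P_2 \times P_2)\Gamma(s) X^s\, ds$, which is $O(X^{\tfrac{1}{2}+\epsilon})$ times the absolute value of that integral.

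To complete the bound, the remaining task is to show that $D(s, P_2 \times P_2)\Gamma(s)$ is absolutely integrable on the line $\Re s = \tfrac{1}{2} + \epsilon$. This follows if $D(s, P_2 \times P_2)$ is of polynomial growth in vertical strips, since $\Gamma(s)$ contributes Stirling-type exponential decay. One argues term-by-term through the decomposition~\eqref{line1:p2_decomposition}--\eqref{line4:p2_decomposition}: the pieces $\zeta$, $L(\cdot,\chi)$, $W_2$, and $L(\cdot,\theta^2) = 4\zeta L(\cdot,\chi)$ on line~\eqref{line1:p2_decomposition}--\eqref{line2:p2_decomposition} satisfy standard convexity bounds; the Mellin--Barnes integrals on lines~\eqref{line3:p2_decomposition}--\eqref{line4:p2_decomposition} are controlled after further shifts of the inner contour, exploiting the gamma ratio $\Gamma(z)\Gamma(s-z)/\Gamma(s)$ for vertical decay in $z$. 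The spectral decomposition inside $W_2$ also requires polynomial bounds, which come from uniform exponential decay of $G(s, it_j)$ in $t_j$ and of $G(s,z)$ in $\Im z$, together with convexity bounds on $L(s,\mu_j)$ and on the Eisenstein-type Dirichlet series $\zeta_{\mathfrak{a}}(s,z)$.

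The main obstacle is verifying polynomial vertical growth of $D(s, P_2 \times P_2)$ uniformly on $\Re s = \tfrac{1}{2}+\epsilon$, particularly controlling the discrete spectral sum close to its line of poles and the continuous spectral residual terms $\mathcal{R}^{\pm}_{1}, \mathcal{R}^{\pm}_{0}$, which involve analytically continued Rankin--Selberg factors. Once these bounds are in hand (via convexity estimates, Stirling, and Weyl-type spectral summation as in~\cite{HulseGaussSphere}), the exponentially decaying $\Gamma(s)$ in the Mellin--Barnes integrand absorbs any polynomial blow-up in $|\Im s|$, and the error integral is bounded by $O_\epsilon(X^{\tfrac{1}{2}+\epsilon})$, completing the proof.
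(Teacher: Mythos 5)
Your proposal is correct and follows essentially the same route as the paper: Mellin inversion, a contour shift from $\Re s = 4$ to $\Re s = \tfrac{1}{2}+\epsilon$ collecting the residues at $s=\tfrac{3}{2}$ and $s=1$ from Theorem~\ref{thm:DsP2xP2_properties}, and an error bound resting on polynomial growth of $D(s,P_2\times P_2)$ in vertical strips against the exponential decay of $\Gamma(s)$. The paper delegates the polynomial-growth verification to~\cite[Lemma~6.2 and Theorem~6.3]{HulseGaussSphere} rather than arguing it term-by-term, but the substance is identical.
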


\begin{proof}
  The proof of~\cite[Theorem~6.3]{HulseGaussSphere} for dimensions $k \geq 3$
  applies, mutatis mutandis, in the dimension $k = 2$ case.
  Briefly, after making the necessary modification to $\mathcal{V}$ as
  in~\eqref{eq:def_V} and studying the meromorphic continuation in
  Theorem~\ref{thm:w2_properties}, we have that $W_2(s)$ is of polynomial
  growth in vertical strips.
  Then~\cite[Lemma~6.2]{HulseGaussSphere} shows that the Mellin-Barnes integral
  appearing in the decomposition of $D(s, S_2 \times S_2)$ from
  Proposition~\ref{prop:DsSS_decomposition} and in the decomposition of $D(s,
  P_2 \times P_2)$ from Theorem~\ref{thm:DsP2xP2_properties} is also of
  polynomial growth, so that $D(s, S_2 \times S_2)$ and $D(s, P_2 \times
  P_2)$ have polynomial growth in vertical strips.
  Once polynomial growth is known, the theorem follows from a routine
  computation in shifting lines of integration.
  In particular, it suffices to shift the line of integration to $\Re s =
  \frac{1}{2} + \epsilon$ and account for the residues stated in
  Theorem~\ref{thm:DsP2xP2_properties}.
\end{proof}

Assuming the Riemann hypothesis, we can give more precise information about the
error term. Shifting the line of integration to $\Re s = \frac{1}{4} + \epsilon$
now passes an additional pole at $s = 1/2$ from $\mathcal{R}_1^\pm$ in the
continuous spectrum and at $s = \tfrac{1}{2} \pm it_j$ from the discrete
spectrum.

From the expansion~\eqref{eq:inner_product_factorization} for $\langle
\mathcal{V}, \mu_j\rangle$, and recalling that $L(1, \chi) = \tfrac{\pi}{4}$, we
can write the sum of the residues at $s = \tfrac{1}{2} \pm it_j$ as
\begin{equation}\label{eq:gross_residues}
\begin{split}
  - 32
  \sum_{\pm t_j}
  &\widetilde{L}(-1 \pm it_j, \mu_j)
  \widetilde{L}(\tfrac{1}{2}, \overline{\mu_j})
  \widetilde{L}(\tfrac{1}{2}, \overline{\mu_j} \otimes \chi)
  \frac{\rho_j(1)^2}{-\frac{1}{2} \pm it_j}
  \times
  \\
  &
  \frac{\Gamma(-1 \pm 2 it_j)
        \Gamma(\tfrac{1}{2} + it_j)
        \Gamma(\tfrac{1}{2} - it_j)
        }
       {\Gamma(-\frac{3}{2} \pm it_j)^2}
  \Gamma(\tfrac{1}{2} \pm it_j)
  X^{\frac{1}{2} \pm it_j}
  ,
\end{split}
\end{equation}
where we use $\widetilde{L}(\cdot, \mu_j)$ to denote the normalized $L$-function
as in Remark~\ref{rmk:discrete}.
By~\cite[\S4]{HoffsteinHulse13} and Stirling's forumla, $\rho_j(1)^2 \Gamma(1
\pm 2it_j)$ has polynomial growth on average over $t_j$.
The exponential growth from the two gamma functions in the denominator cancel
with the exponential decay of two gamma functions in the numerator, and thus the
remaining gamma function guarantees exponential decay of the summands.
Taking absolute values and bounding, we see that this sum converges absolutely.

This proves the following theorem.

\begin{theorem}\label{thm:optimal_error}
  Assume the Riemann hypothesis. Then we have
  \begin{equation*}
  \begin{split}
    \sum_{n \geq 1} P_2(n)^2 e^{-n/X}
    &=
    C_\frac{3}{2} \Gamma(\tfrac{3}{2}) X^\frac{3}{2}
    +
    \Big(\frac{\pi^2}{6}-1\Big) X
    +
    C_\frac{1}{2} \sqrt \pi X^{\frac{1}{2}} \\
    &\quad +
    \sum_{\pm t_j} C_{\pm t_j} \Gamma(\tfrac{1}{2} \pm it_j) X^{\frac{1}{2} \pm it_j}
    +
    O(X^{\frac{1}{4} + \epsilon}),
  \end{split}
  \end{equation*}
  where $\{t_j\}$ range over the types of the basis of Maass forms.
  The residues $C_{\pm t_j}$ are the coefficients in~\eqref{eq:gross_residues},
  and $C_{\frac{1}{2}}$ is the residue of $4 \mathcal{R}_1^{+}(s-1, 0)/(s-1)$ at
  $s = \tfrac{1}{2}$.
\end{theorem}

Although this is more explicit, the contribution from each
pole $\frac{1}{2} \pm it_j$ oscillates wildly. Further, it follows from Theorem
1 of~\cite{lowryduda2019omega} that the contribution from the spectral poles is
$\Omega_{\pm}(X^{1/2})$. Thus there is a natural square root barrier preventing
additional understandable main terms in the asymptotic.

\begin{remark}
  It should be possible to prove a similar result without assuming the Riemann
  Hypothesis, but with an error term of the shape $X^{1/2} (\log X)^{-A}$. To do
  so, one would play the classical game of choosing contours just within the
  zero-free region of the zeta function up to a certain height.
\end{remark}

As in~\cite[\S8]{HulseGaussSphere}, it is possible to use
Theorem~\ref{thm:DsP2xP2_properties} and Theorem~\ref{thm:smooth_cutoff} to
produce an asymptotic for the continuous Laplace transform.

\begin{theorem}\label{thm:continuous_laplace}
  The Laplace transform of the second moment of the lattice point discrepancy satisfies
  \begin{equation*}
    \int_0^\infty P_2(t)^2 e^{-t/X} \, dt = C_\frac{3}{2} \Gamma(\tfrac{3}{2}) X^\frac{3}{2} - X + O_\epsilon\left(X^{\frac{1}{2}+\epsilon}\right),
  \end{equation*}
  for any $\epsilon > 0$, where $C_\frac{3}{2}$ is defined as in Theorem~\ref{thm:DsP2xP2_properties}.
\end{theorem}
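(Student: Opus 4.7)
The plan is to deduce Theorem~\ref{thm:continuous_laplace} from Theorem~\ref{thm:smooth_cutoff} by a direct comparison that exploits the piecewise-linear structure of $P_2(t)$.  Since $S_2(t)$ is constant on each interval $[n,n+1)$, the function $P_2(t) = S_2(t) - \pi t$ satisfies $P_2(t) = P_2(n) - \pi(t-n)$ on that interval.  Squaring, integrating against $e^{-t/X}$, and setting $A_k(X) := \int_0^1 u^k e^{-u/X}\,du$, I obtain the exact decomposition
\begin{equation*}
\int_0^\infty \! P_2(t)^2 e^{-t/X}\, dt \;=\; A_0(X)\, S(X) \;-\; 2\pi A_1(X)\, T(X) \;+\; \pi^2 A_2(X)\, U(X),
\end{equation*}
where $S(X) := \sum_{n\geq 0} P_2(n)^2 e^{-n/X}$, $T(X) := \sum_{n\geq 0} P_2(n) e^{-n/X}$, and $U(X) := \sum_{n\geq 0} e^{-n/X}$.

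Next I would collect the asymptotics of the auxiliary factors.  Elementary Taylor expansion yields $A_0(X) = 1 + O(X^{-1})$, $A_1(X) = \tfrac{1}{2} + O(X^{-1})$, and $A_2(X) = \tfrac{1}{3} + O(X^{-1})$; the Bernoulli expansion of $(1 - e^{-1/X})^{-1}$ gives $U(X) = X + \tfrac{1}{2} + O(X^{-1})$; and $S(X)$ is supplied directly by Theorem~\ref{thm:smooth_cutoff}.  The most delicate input is $T(X)$.  I would split $T(X) = \sum S_2(n) e^{-n/X} - \pi \sum n\, e^{-n/X}$, handle the first piece by Abel summation using the classical modular identity
\begin{equation*}
\sum_{m \geq 1} r_2(m) e^{-m/X} \;=\; \pi X - 1 + O(X^{-N}),
\end{equation*}
which follows from Mellin inversion against $L(s,\theta^2) = 4\zeta(s)L(s,\chi)$ (noting that the apparent gamma poles at $s \leq -1$ are killed by the trivial zeros of $L(s,\theta^2)$), and compute the second piece similarly by Mellin inversion against $\zeta(s-1)$.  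The outcome is $T(X) = \tfrac{\pi X}{2} + O(1)$.

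Substituting these inputs into the decomposition, the $X^{3/2}$ contribution from $A_0(X) S(X)$ carries through unchanged, and the coefficient of $X$ collapses to
\begin{equation*}
\Big(\tfrac{\pi^2}{6} - 1\Big) \;-\; 2\pi \cdot \tfrac{1}{2} \cdot \tfrac{\pi}{2} \;+\; \pi^2 \cdot \tfrac{1}{3} \;=\; -1 + \pi^2\Big(\tfrac{1}{6} - \tfrac{1}{2} + \tfrac{1}{3}\Big) \;=\; -1,
\end{equation*}
exactly matching the claimed $-X$ term.  All remaining subleading contributions from $T$, $U$, and the $A_k$ are absorbed into the $O(X^{1/2+\epsilon})$ error inherited from Theorem~\ref{thm:smooth_cutoff}, which dominates.

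The main obstacle is the accurate evaluation of $T(X)$: the leading term $\tfrac{\pi X}{2}$ must be pinned down with an error of $O(1)$, since it is precisely what produces the $-\tfrac{\pi^2}{2} X$ contribution responsible for cancelling the $\tfrac{\pi^2}{6} X$ piece from $S(X)$.  Once this is in hand, the replacement of the discrete constant $\tfrac{\pi^2}{6} - 1$ by the continuous constant $-1$ is algebraically forced, reflecting the systematic $\tfrac{\pi}{2}$ offset between the step function $S_2$ and its continuous counterpart $\pi t$.
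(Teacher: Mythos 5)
Your proposal is correct and follows essentially the same route as the paper: your decomposition into $A_0(X)S(X) - 2\pi A_1(X)T(X) + \pi^2 A_2(X)U(X)$ is exactly the paper's identity~\eqref{eq:laplace_decomposition} handled in Lemmas~\ref{lem:laplace_decomp_1}--\ref{lem:laplace_decomp_3}, and your evaluation of $T(X) = \tfrac{\pi X}{2} + O(1)$ via $\sum_m r_2(m)e^{-m/X} = \pi X + O(X^{-N})$ (Mellin inversion against $L(s,\theta^2)$, or equivalently the theta functional equation) is precisely the paper's Lemma~\ref{lem:laplace_decomp_3}. The arithmetic cancellation $\tfrac{\pi^2}{6} - \tfrac{\pi^2}{2} + \tfrac{\pi^2}{3} = 0$ matches the paper's, so no gaps.
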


To prove this, we use the identity
\begin{equation} \label{eq:laplace_decomposition}
  P_2(t)^2 = P_2(\lfloor t \rfloor)^2 + \pi^2 (\lfloor t \rfloor - t)^2 + 2\pi P_2(\lfloor t \rfloor)(\lfloor t \rfloor -t)
\end{equation}
and consider the Laplace transform of each term in~\eqref{eq:laplace_decomposition} in turn.

\begin{lemma}[First term in the Laplace transform of~\eqref{eq:laplace_decomposition}]\label{lem:laplace_decomp_1}
  We have
  \begin{equation*}
    \int_0^\infty P_2(\lfloor t \rfloor)^2 e^{-t/X} \, dt = C_\frac{3}{2} \Gamma(\tfrac{3}{2}) X^\frac{3}{2} + \Big(\frac{\pi^2}{6}-1\Big) X + O\left(X^{\frac{1}{2}+\epsilon}\right).
  \end{equation*}
\end{lemma}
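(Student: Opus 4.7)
The plan is to reduce this integral to the smooth cutoff sum already evaluated in Theorem~\ref{thm:smooth_cutoff}, by exploiting the fact that $P_2(\lfloor t \rfloor)$ is a step function. Since $P_2(\lfloor t \rfloor)$ is constant on each interval $[n, n+1)$, I would partition $[0,\infty)$ into unit intervals and write
\begin{equation*}
  \int_0^\infty P_2(\lfloor t \rfloor)^2 e^{-t/X}\,dt
  \;=\;
  \sum_{n=0}^\infty P_2(n)^2 \int_n^{n+1} e^{-t/X}\,dt
  \;=\;
  X\bigl(1 - e^{-1/X}\bigr)\sum_{n=0}^\infty P_2(n)^2 e^{-n/X},
\end{equation*}
after pulling out the common factor from each sub-integral.

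Next, I would apply Theorem~\ref{thm:smooth_cutoff} to the tail sum (handling the $n=0$ contribution separately; this contributes at most a bounded quantity, absorbable into the error), obtaining
\begin{equation*}
  \sum_{n \geq 0} P_2(n)^2 e^{-n/X}
  \;=\;
  C_{\frac{3}{2}}\Gamma(\tfrac{3}{2}) X^{\frac{3}{2}}
  + \Bigl(\tfrac{\pi^2}{6}-1\Bigr)X
  + O_\epsilon\bigl(X^{\frac{1}{2}+\epsilon}\bigr).
\end{equation*}
Then I would Taylor-expand the prefactor as $X(1 - e^{-1/X}) = 1 - \tfrac{1}{2X} + O(X^{-2})$, so that multiplication by the asymptotic above produces the claimed main terms $C_{\frac{3}{2}}\Gamma(\tfrac{3}{2})X^{3/2}$ and $(\pi^2/6 - 1)X$, together with a secondary term of size $X^{1/2}$ from the $-\tfrac{1}{2X}$ correction acting on the leading $X^{3/2}$. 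This secondary term is of the same order as the error and is therefore absorbed into $O_\epsilon(X^{1/2+\epsilon})$; all other cross terms are of strictly smaller order.

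There is no real obstacle: the argument is a direct bookkeeping of the step-function structure combined with Theorem~\ref{thm:smooth_cutoff}. The only point requiring care is verifying that the secondary $-\tfrac{1}{2}C_{\frac{3}{2}}\Gamma(\tfrac{3}{2})X^{1/2}$ contribution from the expansion of $X(1-e^{-1/X})$ is genuinely majorized by $X^{1/2+\epsilon}$, and that the $n=0$ boundary term contributes only $O(1)$; both are immediate. This reduction strategy parallels the approach used for the higher-dimensional analogue in~\cite{HulseGaussSphere}, and is the natural first of the three step-function decompositions needed to obtain Theorem~\ref{thm:continuous_laplace} from~\eqref{eq:laplace_decomposition}.
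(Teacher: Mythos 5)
Your proposal is correct and is essentially identical to the paper's proof: the paper likewise writes the integral as $X(1-e^{-1/X})\sum_{n\geq 0}P_2(n)^2e^{-n/X}$ and invokes Theorem~\ref{thm:smooth_cutoff}. Your extra bookkeeping (the $n=0$ term and the expansion of the prefactor $X(1-e^{-1/X})$) is exactly the routine verification the paper leaves implicit.
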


\begin{proof}
  This follows from the direct computation
  \begin{align*}
    \int_0^\infty P_2(\lfloor t \rfloor)^2 e^{-t/X} \, dt
    &= X(1-e^{-1/X}) \sum_{n \geq 0} P_2(n)^2 e^{-n/X}
  \end{align*}
  and Theorem~\ref{thm:smooth_cutoff}.
\end{proof}

An estimate for the Laplace transform of the second term in~\eqref{eq:laplace_decomposition} follows by specializing~\cite[Lemma~8.4]{HulseGaussSphere} to the case $k=2$.  However, a far simpler proof is available when the dimension is two.

\begin{lemma}[Second term in the Laplace transform of~\eqref{eq:laplace_decomposition}]\label{lem:laplace_decomp_2}
  We have
  \begin{equation*}
    \pi^2 \int_0^\infty (\lfloor t \rfloor - t)^2 e^{-t/X} \, dt = \frac{\pi^2}{3} X + O(1).
  \end{equation*}
\end{lemma}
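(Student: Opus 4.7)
The plan is to exploit the periodicity of the sawtooth function $\lfloor t\rfloor - t$ to reduce the integral to a product of a geometric series and a single unit-interval integral, then expand both factors asymptotically in $1/X$.

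First I would split the integral as a sum over unit intervals, writing
\begin{equation*}
  \int_0^\infty (\lfloor t\rfloor - t)^2 e^{-t/X}\,dt
  = \sum_{n=0}^\infty \int_n^{n+1} (t-n)^2 e^{-t/X}\,dt,
\end{equation*}
using that $(\lfloor t\rfloor - t)^2 = (t-n)^2$ on $[n, n+1)$. Next I would substitute $u = t - n$ in each integral to factor the sum as
\begin{equation*}
  \Bigl(\sum_{n=0}^\infty e^{-n/X}\Bigr) \int_0^1 u^2 e^{-u/X}\,du
  = \frac{1}{1 - e^{-1/X}} \int_0^1 u^2 e^{-u/X}\,du.
\end{equation*}

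The asymptotic analysis is elementary. The geometric factor satisfies $(1-e^{-1/X})^{-1} = X + \tfrac{1}{2} + O(1/X)$ for large $X$, by Taylor expansion of the exponential. For the unit integral, since $e^{-u/X} = 1 + O(1/X)$ uniformly for $u \in [0,1]$, one has
\begin{equation*}
  \int_0^1 u^2 e^{-u/X}\,du = \int_0^1 u^2\,du + O(1/X) = \frac{1}{3} + O(1/X).
\end{equation*}
Multiplying these two estimates gives $\bigl(X + O(1)\bigr)\bigl(\tfrac{1}{3} + O(1/X)\bigr) = \tfrac{X}{3} + O(1)$, and multiplying through by $\pi^2$ yields the claim.

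There is no real obstacle here; the only thing to watch is that the $O(1)$ terms absorb everything below the leading order, so the bookkeeping of the two expansions must be done consistently. The lemma is essentially a one-line computation once the periodicity is exploited, which is why the authors remark that this case admits a far simpler proof than the general $k$-dimensional analogue in~\cite[Lemma~8.4]{HulseGaussSphere}.
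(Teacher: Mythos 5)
Your proof is correct and follows essentially the same route as the paper: split the integral at the integers, factor out the geometric series $\sum_n e^{-n/X}$, and expand in $1/X$. The only cosmetic difference is that the paper evaluates the unit-interval integral in closed form before expanding, while you estimate it directly via $e^{-u/X}=1+O(1/X)$; both yield $\tfrac{\pi^2}{3}X+O(1)$.
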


\begin{proof}
  We compute
  \begin{align}
    &\pi^2 \int_0^\infty (\lfloor t \rfloor - t)^2 e^{-t/X} \, dt
    =
    \pi^2 \sum_{n \geq 0} \int_n^{n+1} (n^2 -2nt+t^2)e^{-t/X} \, dt \notag
    \\
    &\quad = \pi^2 X e^{-1/X}\left(2e^{1/X} X^2 -2X^2 -2X-1\right)\sum_{n \geq 0}  e^{-n/X}.
    \label{line:lemma_asymptotic}
  \end{align}
  Summing the geometric series, computing a series expansion at infinity, and simplifying completes the proof.
\end{proof}

\begin{remark}
  We note that it is possible to write a more complete asymptotic by using more
  terms in the expansion of~\eqref{line:lemma_asymptotic}.
  But keeping the first term and $O(1)$ error is sufficient to ensure that
  Lemma~\ref{lem:laplace_decomp_2} is not the main source of error in
  Theorem~\ref{thm:continuous_laplace}.
\end{remark}

The analysis of the third term relies on the meromorphic properties of the Dirichlet series with coefficients $P_2(n)$.  Again, proofs in general dimension $k \geq 3$ greatly simplify in dimension $2$.

\begin{lemma}[Third term in the Laplace transform of~\eqref{eq:laplace_decomposition}]\label{lem:laplace_decomp_3}
  We have
  \begin{equation*}
    2\pi \int_0^\infty P_2(\lfloor t \rfloor)(\lfloor t \rfloor -t)e^{-t/X} \, dt = -\frac{\pi^2}{2} X + O(1).
  \end{equation*}
\end{lemma}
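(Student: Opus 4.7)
The plan is to reduce the integral to a smoothed sum of $P_2(n) e^{-n/X}$, then estimate that sum via Mellin inversion against $L(s,\theta^2)$. First I split $\int_0^\infty = \sum_n \int_n^{n+1}$ and substitute $u = t-n$ in each piece; using $\lfloor t \rfloor = n$ on $[n, n+1)$, this yields
\begin{equation*}
  \int_0^\infty P_2(\lfloor t \rfloor)(\lfloor t \rfloor - t) e^{-t/X}\, dt = -\Big(\int_0^1 u e^{-u/X}\, du\Big) \sum_{n \geq 0} P_2(n) e^{-n/X},
\end{equation*}
with $\int_0^1 u e^{-u/X}\, du = \tfrac{1}{2} + O(X^{-1})$ by Taylor expansion of $e^{-u/X}$.

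Next I reduce the smoothed sum to a standard quantity by writing $P_2(n) = S_2(n) - \pi n$ and using $S_2(n) = \sum_{m=0}^n r_2(m)$ with $r_2(0) = 1$. A direct rearrangement gives
\begin{equation*}
  \sum_{n \geq 0} P_2(n) e^{-n/X} = \frac{1 + A(X)}{1 - e^{-1/X}} - \frac{\pi e^{-1/X}}{(1-e^{-1/X})^2},
\end{equation*}
where $A(X) := \sum_{m \geq 1} r_2(m) e^{-m/X}$. The main technical step is obtaining a sharp asymptotic for $A(X)$. By Mellin inversion, $A(X) = \tfrac{1}{2\pi i}\int_{(c)} L(s,\theta^2)\Gamma(s) X^s\, ds$ for $c > 1$. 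Shifting the contour to $\Re s = -M$ for arbitrary large $M$ collects only two residues: the pole of $L(s,\theta^2)$ at $s = 1$ contributing $\pi X$, and the pole of $\Gamma(s)$ at $s=0$ contributing $L(0,\theta^2) = 4\zeta(0)L(0,\chi) = -1$. All remaining poles of $\Gamma(s)$ at negative integers are annihilated by zeros of $L(s,\theta^2)$: using the factorization $L(s,\theta^2) = 4\zeta(s)L(s,\chi)$ from~\eqref{eq:Ls_theta_squared_identity}, $L(s,\theta^2)$ vanishes at every negative integer, since $\zeta$ vanishes at negative even integers and $L(s,\chi)$ has trivial zeros at negative odd integers (as $\chi$ is odd). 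Combined with polynomial growth of $L(s,\theta^2)\Gamma(s)$ in vertical strips, this gives $A(X) = \pi X - 1 + O(X^{-M})$ for every $M$.

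To finish, I expand $\tfrac{1}{1 - e^{-1/X}} = X + \tfrac{1}{2} + O(X^{-1})$ and $\tfrac{e^{-1/X}}{(1-e^{-1/X})^2} = X^2 - \tfrac{1}{12} + O(X^{-2})$ via their standard Bernoulli Laurent series, substitute the asymptotic for $A(X)$, and simplify to obtain $\sum_{n \geq 0} P_2(n) e^{-n/X} = \tfrac{\pi X}{2} + O(1)$. Multiplying by the prefactor $-2\pi(\tfrac{1}{2} + O(X^{-1}))$ from the first step yields the claimed $-\tfrac{\pi^2 X}{2} + O(1)$.

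The hard part is the sharp estimate for $A(X)$, and this is precisely where the dimension-$2$ case is simpler than higher dimensions: the factorization $L(s,\theta^2) = 4\zeta(s)L(s,\chi)$ into classical Dirichlet $L$-functions causes $L(s,\theta^2)$ to vanish at every negative integer, eliminating all but two residue contributions and yielding an arbitrarily good polynomial error. For $k \geq 3$ the analogous $L$-function does not factor in this way, and a more elaborate argument of the kind carried out in~\cite{HulseGaussSphere} is required.
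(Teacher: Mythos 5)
Your proof is correct, and its skeleton coincides with the paper's: the same splitting of the integral at integers, the same reduction of $\sum_{n\ge 0}P_2(n)e^{-n/X}$ to $(1-e^{-1/X})^{-1}\sum_{m\ge 0}r_2(m)e^{-m/X}-\pi e^{-1/X}(1-e^{-1/X})^{-2}$, and the same Laurent expansions at the end. The one place you diverge is the key asymptotic for $\sum_{m\ge 0}r_2(m)e^{-m/X}$: the paper evaluates this sum as $\theta^2(i/2\pi X)$ and applies the functional equation of $\theta$ to get $\pi X\bigl(1+O(e^{-\pi^2X})\bigr)$, whereas you run a Mellin inversion against $L(s,\theta^2)\Gamma(s)$ and observe, via the factorization~\eqref{eq:Ls_theta_squared_identity}, that the trivial zeros of $\zeta(s)$ at negative even integers and of $L(s,\chi)$ at negative odd integers annihilate every pole of $\Gamma(s)$ past $s=0$, yielding $\pi X-1+O(X^{-M})$ for $1+A(X)-1$. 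These are two faces of the same fact (the functional equations of $\zeta$ and $L(\cdot,\chi)$ encode the theta transformation), and both errors are far smaller than the $O(1)$ you need; the paper's route is shorter and gives an exponentially small error, while yours makes the $L$-function mechanism explicit and correctly isolates why dimension two is easier than $k\ge 3$. Your residue computations ($\pi$ at $s=1$, $L(0,\theta^2)=-1$ at $s=0$) and the Bernoulli expansions all check out.
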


\begin{proof}
  Splitting the bounds of integration at integers and summing gives
  \begin{align*}
    &I_3
    :=
    2\pi \int_0^\infty P_2(\lfloor t \rfloor)(\lfloor t \rfloor -t)e^{-t/X} \, dt
    =
    2\pi \sum_{n \geq 0} P_2(n) \int_n^{n+1} (n-t)e^{-t/X} \, dt \\
    &\quad= -2\pi X\big(X-Xe^{-1/X} - e^{-1/X}\big) \sum_{n \geq 0} P_2(n) e^{-n/X},
  \end{align*}
The series that remains may be written as
\begin{align*}
&\sum_{n \geq 0}\sum_{m \leq n} r_2(m) e^{-\frac{n}{X}}-\pi \sum_{n \geq 0} n e^{-\frac{n}{X}}=\sum_{m,h \geq 0} r_2(m) e^{-\frac{m+h}{X}} - \frac{\pi e^{-1/X}}{(1-e^{-1/X})^2}\\
&\qquad =(1-e^{-1/X})^{-1} \sum_{m \geq 0} r_2(m) e^{-\frac{m}{X}} -\frac{\pi e^{-1/X}}{(1-e^{-1/X})^2}. 
\end{align*}
The $m$-sum in the line above is $\theta^2(i/2\pi X)$, which we write as $\pi X \theta^2(\pi i X/2)$ by the functional equation for $\theta(z)$.  Series expansions and the estimate $\theta^2(\pi i X/2) = \sum_{m \geq 0} r_2(m) e^{-\pi^2 m X} = 1+ O(e^{-\pi^2 X})$ give the result.
\end{proof}

Combining Lemmas~\ref{lem:laplace_decomp_1},~\ref{lem:laplace_decomp_2}, and~\ref{lem:laplace_decomp_3} gives a proof of Theorem~\ref{thm:continuous_laplace}.

\section{Estimates for Correlation Sums}\label{sec:D2}

Recall the definition
\begin{equation*}
  D_2(s;h) := \sum_{n \geq 0} \frac{r_2(n+h)r_2(n)}{(n+h)^s}.
\end{equation*}
In section~\S\ref{ssec:D2_Z2_mero}, we saw that $D_2(s;h)$ has a meromorphic
continuation to  $\mathbb{C}$ given by~\eqref{eq:D_spectral}. Further, $D_2$ has
a pole at $s = 1$ with residue given by~\eqref{eq:D_residue_1}, and is otherwise
analytic for $\Re s > \frac{1}{2}$.

In this section, we use this information to produce smooth and sharp estimates
for the shifted convolution sums on $r_2(n)r_2(n+h)$ and prove the following
theorem.

\begin{theorem}\label{thm:chamizo}
  Write $h = 2^\alpha h'$ where $2 \nmid h'$.
  For any $\epsilon > 0$, we have
  \begin{equation}\label{eq:chamizo_smoothed}
    \sum_{n \geq 1} r_2(n) r_2(n+h) e^{-n/X}
    =
    C_h X
    + O_\epsilon \big( X^{\frac{1}{2}
    + \epsilon} e^{h/X} h^\Theta \big),
  \end{equation}
  where $\Theta \leq \frac{7}{64}$ denotes the best-known progress towards the
  (non-archimedean) Ramanujan conjecture, the implicit constant in the error
  term is independent of $h$, and where
  \begin{equation} \label{eq:Ch_smorgasbord}
    C_h
    =
    4\pi^2\big(\varphi_{\infty h}(1)
    + \varphi_{0h}(1)\big)
    =
    \big \lvert 2^{\alpha + 1}
    - 3 \big \rvert \frac{8\sigma_1(h')}{h}
    =
    \frac{8(-1)^h}{h} \!\sum_{d \mid h} (-1)^d d.
  \end{equation}
  The error term in~\eqref{eq:chamizo_smoothed} is
  $O(X^{\frac{1}{2}+\epsilon})$, uniformly for $h \ll X$.

  Correspondingly, we have the weak sharp estimate
  \begin{equation}\label{eq:chamizo_compare}
    \sum_{n \leq X} r_2(n+h) r_2(n) = C_h X + O_\lambda\big( (X+h)^{1 - \lambda} + h \big)
  \end{equation}
  for some $\lambda > 0$, for the same constant $C_h$ as above.
\end{theorem}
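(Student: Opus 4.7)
I would apply Mellin inversion using the meromorphic continuation of $D_2(s;h)$ from Section~\ref{ssec:D2_Z2_mero}. Writing $e^{-n/X} = e^{h/X}e^{-(n+h)/X}$ and applying the identity $e^{-y} = \frac{1}{2\pi i}\int_{(\sigma)}\Gamma(s) y^{-s}\,ds$ with $y = (n+h)/X$ yields, for $\sigma$ sufficiently large,
\begin{equation*}
  \sum_{n \geq 1} r_2(n) r_2(n+h) e^{-n/X}
  = \frac{e^{h/X}}{2\pi i}\int_{(\sigma)} D_2(s;h)\, \Gamma(s)\, X^s \, ds - r_2(h),
\end{equation*}
the $-r_2(h)$ accounting for the $n=0$ term separated via $D_2(s;h) - r_2(h)/h^s$. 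I would then shift the contour to $\Re s = \tfrac{1}{2}+\epsilon$, picking up only the simple pole of $D_2(s;h)$ at $s=1$ with residue $4\pi^2(\varphi_{\infty h}(1) + \varphi_{0h}(1)) = C_h$ from~\eqref{eq:D_residue_1}, producing the main term $e^{h/X} C_h X$ (which equals $C_h X$ in the regime $h \ll X$ up to errors dominated by the stated tail).

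Bounding the integral on the shifted line $\Re s = \tfrac{1}{2}+\epsilon$ proceeds term-by-term through the spectral expansion~\eqref{eq:D_spectral}. The non-spectral term contributes $O(h^{-1/2+\epsilon})$ via divisor bounds on $\varphi_{\mathfrak{a}h}(1)$; the discrete spectrum contributes $O(h^\Theta)$ because $|\rho_j(h)| \ll h^{\Theta-1/2} |\rho_j(1)|$ by progress toward the Ramanujan conjecture, with the sum over $j$ converging thanks to exponential decay of $G(s,it_j)$ in $|t_j|$ combined with average growth of the inner products $\langle \mathcal{V}, \mu_j\rangle$; the continuous spectrum contributes $O(h^\epsilon)$ using the explicit divisor-type formulas for $\varphi_{\mathfrak{a}h}(\tfrac{1}{2}+it)$. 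The factor $\Gamma(s)X^s$ decays exponentially in $|\Im s|$ and contributes $X^{1/2+\epsilon}$, yielding the claimed error $O(X^{1/2+\epsilon}e^{h/X}h^\Theta)$.

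The closed-form evaluations of $C_h$ in~\eqref{eq:Ch_smorgasbord} follow by inserting~\eqref{eq:Eisen_coeffs} at $t=1$ with $\zeta^{(2)}(2) = \pi^2/8$, writing $h = 2^\alpha h'$ with $h'$ odd, and treating $\alpha = 0$ and $\alpha \geq 1$ separately to simplify the divisor sums $\sigma_{-1}(h/4)$, $\sigma_{-1}(h/2)$, and $\sigma_{-1}^{(2)}(h) = \sigma_1(h')/h'$. Equivalence with the third expression is then the elementary identity $\sum_{d \mid h}(-1)^d d = (-1)^h(2^{\alpha+1}-3)\sigma_1(h')$, obtained by splitting the divisors of $h$ into odd divisors $d \mid h'$ (contributing $-\sigma_1(h')$) and even divisors of the form $2^a d$ with $1 \leq a \leq \alpha$ (contributing $(2^{\alpha+1}-2)\sigma_1(h')$).

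For the sharp estimate~\eqref{eq:chamizo_compare}, I would apply a truncated Perron formula to $D_2(s;h)$ with parameter $Y = X+h$, shift past the pole at $s = 1$ to a line $\Re s = 1 - \lambda'$ for small $\lambda' > 0$ (which remains above the line of spectral poles at $\Re s = \tfrac{1}{2}$), collect the residue $C_h Y$, and bound the resulting shifted integral using the same spectral ingredients. Optimally balancing the truncation height $T$ against the vertical line integral produces an error $O_\lambda((X+h)^{1-\lambda})$ for some $\lambda > 0$, while the discrepancy $C_h Y - C_h X = C_h h = O(h^{1+\epsilon})$ is absorbed into the $+h$ error term. \textbf{The main obstacle} throughout is the uniform control of $D_2(s;h)$ in the $h$-aspect on lines strictly inside the critical strip: even though individual Fourier coefficients $\rho_j(h)$ are controlled by $h^\Theta$ via Hecke–Ramanujan progress, the simultaneous handling of the discrete inner products $\langle \mathcal{V}, \mu_j\rangle$ --- for which the Rankin–Selberg factorization~\eqref{eq:inner_product_factorization} is essential --- combined with the summation over the spectrum requires careful use of average bounds on central Hecke $L$-values.
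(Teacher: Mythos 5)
Your overall architecture matches the paper's: Mellin inversion through $D_2(s;h)$ with the weight $e^{-(n+h)/X}$, a contour shift to $\Re s = \tfrac12+\epsilon$, termwise bounds on the spectral expansion~\eqref{eq:D_spectral} (Ramanujan progress $h^\Theta$ for the discrete part, divisor bounds on $\varphi_{\mathfrak{a}h}(\tfrac12+it)$ for the continuous part), and the evaluation of $C_h$ directly from~\eqref{eq:Eisen_coeffs}. However, there is a genuine gap in your treatment of the non-spectral part. You extract only the residue $C_h X$ at $s=1$ and claim the leftover integral on $\Re s = \tfrac12+\epsilon$ is $O(h^{-1/2+\epsilon})$. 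That bound cannot be right: on that line $\lvert h^{1-s}\rvert = h^{1/2-\epsilon}$, so no negative power of $h$ is available. In fact, since $\Gamma(s)/(s-1)=\Gamma(s-1)$, the non-spectral Mellin integral over $(4)$ evaluates \emph{exactly} to $C_h X e^{-h/X}$, so the leftover after removing the residue is exactly $C_h X(e^{-h/X}-1) \ll C_h\min(h,X) \ll h^{1+\epsilon}$. For $h \gg X^{1/(2(1-\Theta))}$ this exceeds the claimed error $X^{1/2+\epsilon}h^\Theta$, so your version loses the uniformity for $h \ll X$ asserted in the theorem. The same issue infects your statement that the main term $e^{h/X}C_h X$ ``equals $C_h X$ up to errors dominated by the stated tail'': the discrepancy is again of size $C_h h\, e^{h/X}$. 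The paper avoids all of this by keeping $C_h X e^{-h/X}$ as the exact non-spectral contribution, so that multiplying the whole identity by $e^{h/X}$ produces the main term $C_h X$ with no approximation. This is the one step you must do exactly rather than bound.

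Two smaller points. First, your elementary identity $\sum_{d\mid h}(-1)^d d = (-1)^h(2^{\alpha+1}-3)\sigma_1(h')$ has a sign slip for odd $h$ (both sides should then equal $-\sigma_1(h')$); your divisor-splitting computation actually gives $\sum_{d\mid h}(-1)^d d = (2^{\alpha+1}-3)\sigma_1(h')$ in all cases, and multiplying by $(-1)^h$ yields $\lvert 2^{\alpha+1}-3\rvert\sigma_1(h')$ as required --- the content is fine, the bookkeeping is not. Second, for the weak sharp estimate~\eqref{eq:chamizo_compare} you propose a truncated Perron formula, whereas the paper assembles it from a family of smoothed cutoffs $v((n+h)/X)$ with Mellin transforms $V(s)$, following~\cite[\S7]{HulseGaussSphere}; both routes rest on the polynomial growth of $D_2(s;h)$ in vertical strips, but Perron requires you to additionally control the truncation error coming from the nonnegative coefficients, which the concentrating-cutoff method sidesteps. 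Neither of these is fatal, but the non-spectral evaluation above must be repaired for the proof to deliver the stated range of $h$.
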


\begin{remark}
Theorem~\ref{thm:chamizo} recovers the leading term evaluation of the
correlation sums investigated by Chamizo~\cite{Chamizo1999} and
Ivi\'c~\cite{Ivic1996}, as well as some power savings. The sharp
sum~\eqref{eq:chamizo_compare} is the same sum appearing
in~\eqref{eq:convolution_ivic}. Preliminary investigation suggests that one
could explicitly take $\lambda = 1/8$ using trivial bounds on $D_2(s; h)$. A
deeper and more sophisticated analysis would produce better $\lambda$, but it is
not clear whether the approach outlined here would allow one to improve upon the
bound $\lambda = \frac{1}{3} - \epsilon$ achieved by Chamizo.
\end{remark}

Analyzing the meromorphic continuation of $D_s(s;h)$ given in~\eqref{eq:D_spectral}, one can see that $D_2(s;h)$ is of polynomial growth in vertical strips.
It is therefore straightforward to estimate the integral
\begin{equation}\label{eq:rr_I}
  \frac{1}{2\pi i} \int_{(4)} D_2(s; h) X^s \Gamma(s) ds = \sum_{n \geq 1} r_2(n+h) r_2(n) e^{-(n+h)/X}
\end{equation}
by shifting contours.
Our analysis follows the decomposition of $D_2(s;h)$ given in~\eqref{eq:D_spectral}.
The non-spectral part of $D_2(s;h)$ can be evaluated explicitly through the Mellin inversion identity
\begin{equation*}
  \frac{1}{2\pi i} \int_{(4)} \frac{4\pi^2}{s - 1} \frac{\varphi_{\infty h}(1) + \varphi_{0 h}(1)}{h^{s-1}} X^s \Gamma(s) ds
  =
  4\pi^2(\varphi_{\infty h}(1) + \varphi_{0 h}(1)) Xe^{-h/X}.
\end{equation*}

To bound the contribution from the discrete and continuous components, we shift the line of $s$-integration to $(\frac{1}{2} + \epsilon)$ and bound the integrals.
The $h$-dependence within the discrete spectrum is determined by $\rho_j(h)/h^{s - \frac{1}{2}}$.
Noting that $\rho_j(h) / h^{s - \frac{1}{2}} \ll_{j} h^{\Theta -\epsilon/2}$, where $\Theta \leq \frac{7}{64}$ denotes the best progress towards the (non-archimedean) Ramanujan conjecture, it follows that the discrete contribution is $O_\epsilon(X^{\frac{1}{2} + \epsilon} h^\Theta)$.

Similarly, the $h$-dependence within the continuous spectrum is determined by ${\varphi_{\mathfrak{a} h}(\frac{1}{2} - z)}/ h^{s - \frac{1}{2} - z}$.
From the estimate $\zeta(1+ i t)^{-1} \ll \log(1+ \vert t \vert)$, we obtain $\varphi_{\mathfrak{a} h}(\frac{1}{2} + it) \ll d(h) \log (1+\vert t \vert)$. Thus the continuous contribution is
\begin{equation*}
  O_\epsilon \Big( \frac{d(h)}{h^{\epsilon}} X^{\frac{1}{2}+\epsilon} \Big)
  =
  O_\epsilon(X^{\frac{1}{2}+\epsilon}).
\end{equation*}
We conclude that
\begin{equation*}
  \sum_{n \geq 1} r_2(n+h)r_2(n) e^{-(n+h)/X}
  =
  C_h X e^{-h/X} + O_\epsilon \big( X^{\frac{1}{2}+\epsilon} h^\Theta \big),
\end{equation*}
in which $C_h :=4\pi^2(\varphi_{\infty h}(1)+\varphi_{0h}(1))$.
Multiplying by $e^{h/X}$ proves~\eqref{eq:chamizo_smoothed}.

Using that $D_2(s; h)$ has polynomial growth in vertical strips and nonnegative coefficients, one can combine smoothed integral estimates of the form
\begin{equation}\label{eq:generic_integral_cutoff}
  \frac{1}{2\pi i} \int_{(4)} D_2(s; h) X^s V(s) ds = \sum_{n \geq 1} r_2(n+h) r_2(n) v(\tfrac{n+h}{X}),
\end{equation}
where $V$ and $v$ are Mellin transform pairs, in order to produce estimates for the sharp sums.
In particular, techniques in~\cite[\S5.5.2 and \S5.5.3]{LowryDudaThesis} or~\cite[\S7]{HulseGaussSphere} show how to combine smoothed integrals of the form~\eqref{eq:generic_integral_cutoff} to assemble the weak sharp sum estimate~\eqref{eq:chamizo_compare}.
Similarly, one can prove weak short-interval estimates.

It remains to see that $C_h$ may be written using the alternate expressions given in~\eqref{eq:Ch_smorgasbord}. The equivalence of the two expressions for $C_h$ on the right of~\eqref{eq:Ch_smorgasbord} is straightforward using multiplicativity. We now note that $C_h$ is of the form presented in~\cite[Theorem~3.2]{Chamizo1999}.

\begin{lemma}
  Suppose $h = 2^\alpha h'$ where $2 \nmid h'$.
  Then
  \begin{equation}\label{eq:Chamizo_ID_I}
C_h = 4\pi^2 \big( \varphi_{\infty h} (1) + \varphi_{0 h}(1) \big)
    =
     \frac{8\sigma_1(h')}{h} \big \lvert 2^{\alpha + 1} - 3 \big \rvert.
  \end{equation}
\end{lemma}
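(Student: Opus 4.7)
The plan is to verify the identity through a direct computation using the explicit formulas for $\varphi_{\mathfrak{a}h}(t)$ from~\eqref{eq:Eisen_coeffs}, specialized to $t=1$, followed by a short case analysis on $\alpha = v_2(h)$. The key simplifications are that $\zeta^{(2)}(2) = \zeta(2)(1 - 2^{-2}) = \pi^2/8$ and that for any $n = 2^k m$ with $m$ odd we have $\sigma_{-1}(n) = (2 - 2^{-k})\sigma_{-1}(m) = (2 - 2^{-k})\sigma_1(m)/m$, with the convention that $\sigma_{-1}(h/2^j)$ vanishes when $2^j \nmid h$.

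First I would evaluate $\varphi_{0h}(1)$. From~\eqref{eq:Eisen_coeffs}, $\varphi_{0h}(1) = \sigma^{(2)}_{-1}(h)/(4\zeta^{(2)}(2))$, and since $\sigma^{(2)}_{-1}(h) = \sigma_{-1}(h') = \sigma_1(h')/h'$, we obtain
\begin{equation*}
  4\pi^2 \varphi_{0h}(1) = \frac{8 \sigma_1(h')}{h'}.
\end{equation*}
Next I would compute $4\pi^2 \varphi_{\infty h}(1) = 8\sigma_{-1}(h/4) - 4\sigma_{-1}(h/2)$ from the corresponding formula. The $h$-dependence collapses after invoking the multiplicative identity above for $\sigma_{-1}$.

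The final step is a case analysis on $\alpha$. When $\alpha = 0$, both terms in $\varphi_{\infty h}(1)$ vanish, and the total is $8\sigma_1(h')/h' = (8\sigma_1(h')/h)\lvert 2 - 3\rvert$, as desired. When $\alpha = 1$, only the $\sigma_{-1}(h/2)$ term survives and contributes $-4\sigma_1(h')/h'$, giving total $4\sigma_1(h')/h' = (8\sigma_1(h')/h)\lvert 4 - 3\rvert$. When $\alpha \geq 2$, both divisor sums are nonzero and expand via $\sigma_{-1}(2^{\alpha-2}h') = (2 - 2^{2-\alpha})\sigma_1(h')/h'$ and similarly for $h/2$; collecting terms yields
\begin{equation*}
  4\pi^2\big(\varphi_{\infty h}(1) + \varphi_{0h}(1)\big) = \big(16 - 3 \cdot 2^{3-\alpha}\big)\frac{\sigma_1(h')}{h'} = 2^{3-\alpha}(2^{\alpha+1} - 3)\frac{\sigma_1(h')}{h'},
\end{equation*}
which equals $(8\sigma_1(h')/h)(2^{\alpha+1} - 3)$ since $h = 2^\alpha h'$ and the bracketed quantity is positive for $\alpha \geq 2$.

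There is no real obstacle here beyond careful bookkeeping: the absolute value sign in the claimed formula is forced precisely because $2^{\alpha + 1} - 3$ changes sign between $\alpha = 0$ and $\alpha \geq 1$, and the three cases above yield the matching sign conventions automatically. Unifying the three cases into a single closed form produces exactly $\lvert 2^{\alpha+1} - 3\rvert \cdot 8\sigma_1(h')/h$, completing the identification with Chamizo's constant.
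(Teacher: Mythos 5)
Your proof is correct and follows essentially the same route as the paper: specialize \eqref{eq:Eisen_coeffs} at $t=1$, use $\zeta^{(2)}(2)=\pi^2/8$, and run a case analysis on $\alpha = v_2(h)$ (the paper merely packages the three cases as $2^{\alpha}+4\sigma_1(2^{\alpha-2})-\sigma_1(2^{\alpha-1})$ before checking them). All of your intermediate evaluations, including the $\alpha\ge 2$ computation $16-3\cdot 2^{3-\alpha}=2^{3-\alpha}(2^{\alpha+1}-3)$, are accurate.
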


\begin{proof}
By~\eqref{eq:Eisen_coeffs}, $C_h$ can be written as
\begin{equation*}
  4\pi^2
  \bigg(
    \frac{\sigma^{(2)}_{-1}(h)}{4 \zeta^{(2)}(2)}
    +
    \frac{\frac{1}{4}\sigma_{-1}(\frac{h}{4}) - \frac{1}{8}\sigma_{-1}(\frac{h}{2})}{\zeta^{(2)}(2)}
  \bigg)
  =
   \frac{8\sigma_{1}(h')}{h} \Big( 2^\alpha + 4\sigma_{1}(2^{\alpha-2}) - \sigma_{1}(2^{\alpha-1})  \Big),
\end{equation*}
where
$\sigma_1(x) = 0$ if $x$ is not a positive integer. A simple case analysis of the last parenthetical expression above yields the proof.
\end{proof}

\bibliographystyle{alpha}
\bibliography{compiled_bibliography}

\end{document}